\tikzset{
    labl/.style={anchor=south, rotate=-90, inner sep=.6mm},
    symbol/.style={draw=none, every to/.append style={edge node={node [sloped, allow upside down, auto=false]{$#1$}}}},
    ext/.style={circle, draw, inner sep=1pt},
    int/.style={circle, draw, fill, inner sep=1pt},
    exte/.style={circle, draw, inner sep=3pt},
    inte/.style={circle, draw, fill, inner sep=3pt},
    diagram/.style={matrix of math nodes, row sep=3em, column sep=2.5em, text height=1.5ex, text depth=0.25ex},
    diagram2/.style={matrix of math nodes, row sep=0.5em, column sep=0.5em, text height=1.5ex, text depth=0.25ex},
    every picture/.style={baseline=-.65ex},
    every loop/.style={draw},
    rloop/.style={out=10, in=-10, loop, distance=3em},
    aloop/.style={out=100, in=80, loop, distance=3em}
}
\definecolor{darkblue}{rgb}{0,0.1,.5}
\newcommand*{\RN}[1]{\expandafter\@slowromancap\romannumeral #1@}
\newcommand{\sbullet}[1][.5]{\mathbin{\vcenter{\hbox{\scalebox{#1}{$\bullet$}}}}}
\newcommand{\hr}[2][]{\hyperref[#2]{#1~\ref{#2}}}
\newcommand*\rcolon{%
        \nobreak
        \mskip6mu plus1mu
        \mathpunct{}%
        \nonscript
        \mkern-\thinmuskip
        {:}%
        \mskip2mu
        \relax
}
\def\R{\mathbb{R}}
\def\Z{\mathbb{Z}}
\def\Q{\mathbb{Q}}
\def\cone{\mathop{\mathrm{Cone}}\nolimits}
\def\emb{\mathop{\mathrm{Emb}}\nolimits}
\def\semb{\mathop{\mathrm{sEmb}}\nolimits}
\def\imm{\mathop{\mathrm{Imm}}\nolimits}
\def\Frm{\mathop{\mathrm{Fr}}^m\nolimits}
\def\Path{\mathop{\mathrm{Path}}\nolimits}
\def\map{\mathop{\mathrm{Map}}\nolimits}
\def\mor{\mathop{\mathrm{Mor}}\nolimits}
\def\conf{\mathop{\mathrm{Conf}}\nolimits}
\def\hofib{\mathop{\mathrm{hofib}}\nolimits}
\def\res{\mathop{\mathrm{Res}}\nolimits}
\def\cores{\mathop{\mathrm{coRes}}\nolimits}
\def\ind{\mathop{\mathrm{Ind}}\nolimits}
\def\coind{\mathop{\mathrm{coInd}}\nolimits}
\def\mc{\mathop{\mathsf{MC}}_{\sbullet}\nolimits}
\def\hgc{\mathop{\mathrm{HGC}}\nolimits}
\def\Com{\mathsf{Com}}
\newcommand{\pIG}{\mathsf{pIG}}
\newcommand{\IG}{\mathsf{IG}}
\newcommand{\Graphs}{\mathsf{Graphs}}
\def\MC{\mathop{\mathsf{MC}}\nolimits}
\def\top{\mathcal T\!op}
\def\Mod{\mathcal Mod}
\def\id{\mathds{1}}
\def\leq{\leqslant}
\def\geq{\geqslant}
\newtheorem{theorem}{Theorem}[section]
\newtheorem*{theorem*}{Theorem}
\newtheorem{lemma}[theorem]{Lemma}
\newtheorem{proposition}[theorem]{Proposition}
\newtheorem{corollary}[theorem]{Corollary}
\theoremstyle{definition}
\newtheorem{construction}[theorem]{Construction}
\newtheorem{definition}[theorem]{Definition}
\newtheorem{remark}[theorem]{Remark}
\numberwithin{equation}{section}
\newcommand{\thefuturetheoreminner}{} 
\NewDocumentEnvironment{futuretheorem}{ m o +b }
{
    \renewcommand{\thefuturetheoreminner}{\ref{#1}}
    \IfNoValueTF{#2}
    {\futuretheoreminner}
    {\futuretheoreminner[#2]}
    {\emph{#3}}
}
{
    \endfuturetheoreminner
    \prop_gput:Nnn \g_alevel_future_prop { #1 } { #3 }
    \IfValueT{#2}{ \prop_gput:Nnn \g_alevel_future_prop { #1-attr } { #2 } }
}
\NewDocumentCommand{\pasttheorem}{m}
{
    \prop_if_in:NnTF \g_alevel_future_prop { #1-attr }
    {
        \begin{theorem}[\prop_item:Nn \g_alevel_future_prop { #1-attr }]
    }
    {
        \begin{theorem}
    }
    \label{#1}
    \prop_item:Nn \g_alevel_future_prop { #1 }
    \end{theorem}
}
\renewenvironment{proof}[1][{\itshape Proof}]{{\itshape #1. }}{\qed}
\title{Embedding calculus for parallelized manifolds}
\author{Semyon Abramyan}
\address{Department of Mathematics, ETH Zürich, Rämistrasse 101, 8092 Zürich, Switzerland}
\email{semyon.abramyan@gmail.com}
\begin{document}

\begin{abstract}
We study a variant of the embedding functor $\emb(M, N)$ that incorporates homotopical data from the frame bundle of the target manifold $N$. Given a parallelized $m$-manifold $M$ and an $n$-manifold $N$ equipped with a section of its $m$-frame bundle, we define a modified embedding functor $\widetilde\emb(M, N)$ that interpolates between the standard embedding and a reference framing. Using the manifold calculus of functors, we identify the Taylor tower of $\widetilde\emb(M, N)$ with a mapping space of right modules over the Fulton–MacPherson operad. We prove a convergence theorem under a codimension condition, establishing a weak equivalence between $\widetilde\emb(M, N)$ and its Taylor approximation. Finally, under rationalization, we describe the derived mapping space in terms of a combinatorial hairy graph complex, enabling computational access to the rational homotopy type of the space of embeddings.
\end{abstract}

\maketitle
\tableofcontents

\section{Introduction}

Let $M$ be a parallelised manifold of dimension~$m$, and let $N$ be a smooth manifold of dimension~$n$ that admits a section of the bundle $\Frm(N)$ of $m$-frames on $N$. Denote by $\mathcal O(M)$ the poset of open subsets of $M$ ordered by inclusion.

Introduced by Weiss in \cite{weis99} the \emph{manifold calculus of functors} gives a way to study the homotopy type of functors $F\colon \mathcal O(M) \to \top$ which take isotopy equivalences to weak equivalences. For such a functor Goodwillie, Klein and Weiss define a \emph{Taylor tower}
\begin{center}
\begin{tikzcd}
    & F
    \arrow[dl]
    \arrow[d]
    \arrow[dr]
    \arrow[drr]
    \\
    T_0 F
    &
    T_1 F
    \arrow[l]
    &
    T_2 F
    \arrow[l]
    &
    \arrow[l]
    T_3 F
    &
    \cdots
    \arrow[l]
\end{tikzcd}
\end{center}
of \emph{polynomial approximations} of $F$. Nowadays it is clear that there is a deep relation between the manifold calculus of functors and the operad of little discs~$L\mathbb D_m$. Namely, convergence results of Goodwillie-Weiss~\cite{go-we99} (see also~\cite[Theorem~2.1]{turc13}) imply that if $\dim N - \dim M \geqslant 3$ there are weak equivalences
\[
    \emb(M, N) \xrightarrow{\sim}
    T_\infty\emb(M, N) \xrightarrow{\sim} \map^h_{\mathrm{mod}\text-\mathcal F_m}(\mathcal F_M, \mathcal F^{m\text{-fr}}_N),
\]
where $\mathcal F_m$, $\mathcal F_M$ and $\mathcal F^{m\text{-fr}}_N$ refer to the Fulton-MacPherson operad of $\R^m$, the Axelrod-Singer-Fulton-MacPherson completion of the configuration space of points on $M$ and it's $m$-framed version on~$N$, respectively.
A different (though very similar) incarnation of the embedding functor was studied in \cite{ar-tu14, f-t-w20}. Namely, it was shown that for the embeddings modulo immersions functor
\[
    \overline\emb(M, \R^n) \coloneqq \hofib \big(\emb(M, \R^n) \to \imm(M, \R^n)\big)
\]
there are weak equivalences
\[
    \overline\emb(M, \R^n) \xrightarrow{\sim}
    T_\infty\overline\emb(M, \R^n) \xrightarrow{\sim}
    \map^h_{\mathrm{mod}\text-\mathcal F_m}(\mathcal F_M, \mathcal F_n).
\]
The rational homotopy type of the latter space can be described purely combinatorially (see below).

We consider a slight modification of $\overline\emb$ that allows us to consider a bit more general target manifold, but at the same time is still controllably close to the original embedding functor.

\begin{definition}[$\widetilde\emb(M, N)$]\label{emb-tilde}
Let $M$ be a parallelized manifold of dimension~$m$, and let $N$ be a smooth manifold of dimension~$n$ with a fixed section $\sigma_{std}$ of the bundle $\Frm(N)$ of $m$-frames on $N$.

An embedding $f\colon M \hookrightarrow N$ gives us two sections of the induced bundle $f^*\Frm(N)$ over $M$. The first section is the $m$-frame defined by $df$. The second one is the composite of $f$ with the section $\sigma_{std}$.

Let $\widetilde\emb(M, N)$ (resp. $\widetilde\imm(M, N)$) be the set of pairs $(f, h)$, where $f\colon M \hookrightarrow N$ is an embedding (resp. an immersion) and $h\colon [0,1] \to \Gamma(M, f^*\Frm(N))$ is a path from $\sigma_{std}$ to $df$.

In the similar fashion we define $\widetilde{\mathcal F}_N$ to be a right $\mathcal F_m$-module with the $r$-arity component $\widetilde{\mathcal F}_N(r)$ defined as follows. The space $\widetilde{\mathcal F}_N(r)$ is the space of pairs $\big((x_1, \ldots, x_r), (h_1, \ldots, h_r)\big)$, where $(x_1, \ldots, x_r) \in \mathcal F_N^{m-fr}$ is an $m$-framed configuration on $N$, and $h_i$, $i = 1, \ldots, r$ is a deformation of the $m$-frame at $x_i$ terminating at $\sigma_{std}$. The right $\mathcal F_m$-module structure is given by acting naturally on the first component and duplicating the deformation. 
\end{definition}

Using results of~\cite{ar-tu14, turc13} we prove in this paper the following theorem.\!\footnote{%
Here and further on, $\mathrm{mod}_{\leqslant k}\text-\mathcal F_m$ denotes the category of $k$-truncated right $\mathcal F_m$-modules.%
}

\begin{futuretheorem}{main-limit}
	In the above notation there is a natural equivalence for all $k\leqslant \infty$
	\[
	T_k\widetilde\emb_N(U) \simeq \map^h_{\mathrm{mod}_{\leqslant k}\text-\mathcal F_m} \big(\mathcal F_M, \widetilde{\mathcal F}_N\big).
	\]
\end{futuretheorem}

Moreover we have a Goodwille-Weiss spirit convergence result for $\widetilde\emb$.

\begin{futuretheorem}{main-convergence}
	Let $M$ be a parallelized manifold of dimension~$m$.
	Let $N$ be a smooth manifold of dimension~$n$ with a fixed $m$-frame
	$\sigma_{std}\colon N \to \Frm(N)$. Assume that $n - m \geqslant 3$.
	Then the limit map
	\[
	\widetilde\emb(M,N) \xrightarrow{\sim} T_\infty\widetilde\emb(M, N)
	\]
	is a weak equivalence.
\end{futuretheorem}

Finally, to connect to the computational side we consider the canonical morphism 
\[
\map^h_{\mathrm{mod}\text-\mathcal F_m}(\mathcal F_M, \mathcal F_N)
\to
\map^h_{\mathrm{mod}\text-\mathcal F_m}(\mathcal F_M, \mathcal F_N^{\mathbb Q})
\]
induced by the rationalization $\mathcal F_N \to \mathcal F_N^\Q$. Under certain assumptions on the manifolds, the morphism between the derived mapping spaces above is expected to be a component-wise rational homotopy equivalence (see~\cite[Theorem~1.2]{f-t-w20} for the analogous result). The target can be described combinatorially using the following theorem.

\begin{futuretheorem}{main-computation}
Let $M$ and $N$ be parallelized manifolds of dimensions $m$ and $n$, respectively. Assume that $n - m \geqslant 2$.
Then there is a weak equivalence
\[
	\map^h_{\mathrm{mod}\text-\mathcal F_m} (\mathcal F_M, \mathcal F_N^{\mathbb Q})
	\simeq
	\mc(\hgc^Z_{A_M, H^{\sbullet}(N), n}).
\]
See \textup{\hr[Section]{hairy-graphs-section}} for the definition of the hairy graph complex~$\hgc_{U, V, n}$.
\end{futuretheorem}

\subsection*{Acknowledgments} I am grateful to my supervisor, Thomas Willwacher, for proposing the problem and for the countless hours of helpful discussions. I would also like to thank Victor Turchin and Greg Arone for their guidance on embedding calculus. 

The work was furthermore supported by the NCCR Swissmap, funded by the
Swiss National Science Foundation.

\section{Preliminaries}

\subsection{Topological $W$-construction}
The \emph{$W$-construction} is a functorial cofibrant resolution for operads. It was introduced in~\cite{boar71}. Here we modify the $W$-construction for modules over a topological operad. But first let us recall the original operadic version.

\begin{construction}[Classical Boardman-Vogt resolution]\label{boardman-resolution}
Let $\mathcal P$ be a topological operad. Let $Tree_k$ be the set of isomorphism classes of plain trees with $k$ leaves. For each tree~$\tau$ denote by $V(\tau)$ the set of its vertices and by $E(\tau)$ its set internal edges. For a vertex $v\in V(\tau)$ let $star(v)$ be the set of edges incoming to~$v$.

Let $\mathcal T_k(\mathcal P)$ be the space of ordered trees with $k$ leaves, vertices labeled by~$\mathcal P$, and internal edges labeled by an element of $I = [0,1]$:
\[
    \mathcal T_k(\mathcal P) \coloneqq \bigsqcup_{\tau \in Tree_k} \Big(\prod_{v\in V(\tau)}\mathcal P\big(star(v)\big)\times \prod_{e\in E(\tau)} [0,1] \Big).
\]
The space $W\mathcal P(k)$ is the quotient of $\mathcal T_k(\mathcal P)$ by the followings relations:
\begin{itemize}[leftmargin=0.03\textwidth]
\item Suppose that $\tau\in \mathcal T_k(\mathcal P)$, $v\in V(\tau)$ is a vertex of valence~$n$ labeled by $p\in \mathcal P(n)$, the subtrees stemming from~$v$ are $\tau_1 < \cdots < \tau_n$, and $\sigma \in S_n$. Then $\tau$ is equivalent to the element obtained from $\tau$ by replacing~$p$ by $\sigma^{-1}p$ and by permuting the order of the subtrees to $\tau_{\sigma_1} < \cdots < \tau_{\sigma_n}$.

\item If $\tau$ has an edge $e$ of length~$0$, then $\tau$ is equivalent to the tree obtained by contracting~$e$ and (partially) composing the labels of its vertices.

\item If $\tau$ has a vertex~$v$ of valence~$1$ labeled by the unit $\iota\in\mathcal P(1)$ of the operad~$\mathcal P$, then $\tau$ is equivalent to the tree obtained by removing~$v$.
If~$v$ is between two internal edges of lengths $s$ and $t$, then the length of the merged edge is~$s+t-st$.
\end{itemize}

The action of $S_k$ on $W\mathcal P(k)$ is given by permuting the labeling of the leaves of elements in $\mathcal T_k(\mathcal P)$. Finally, the operad structure on $W\mathcal P$ is defined by grafting trees, and by assigning length~$1$ to the new internal edges. A natural ordering of the leaves of the composite is induced. The trivial tree consisting of an edge with no vertices is the identity of~$W\mathcal P$.

\begin{proposition}[{\cite{boar71}}]
Let $\mathcal P$ be a topological operad such that $\{\iota\} \hookrightarrow \mathcal P(1)$ is a cofibration and each $\mathcal P(n)$ is a cofibrant $S_n$-space. Then $W\mathcal P$ is a cofibrant resolution of $\mathcal P$ with the map $W\mathcal P \xrightarrow{\sim} \mathcal P$ contracting the edges and multicomposing the vertex labels.
\end{proposition}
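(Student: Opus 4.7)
The statement splits into two independent claims: (i) the canonical projection $\pi\colon W\mathcal P\to\mathcal P$ that contracts every internal edge and multicomposes the vertex labels is a weak equivalence of topological operads, and (ii) $W\mathcal P$ is cofibrant in the model structure on topological operads under the given hypotheses. I would treat these separately.

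For (i), I would exhibit an explicit strong deformation retraction of $W\mathcal P$ onto the image of the section $\eta\colon\mathcal P\to W\mathcal P$ that sends $p\in\mathcal P(n)$ to the corolla with single vertex labeled by $p$. One immediately has $\pi\eta=\mathrm{id}_{\mathcal P}$, and for the other composite I would use the homotopy $H_\lambda$, $\lambda\in[0,1]$, that multiplies every internal edge length by $\lambda$. At $\lambda=1$ this is the identity, while at $\lambda=0$ all internal edges have length zero, so the contraction relation collapses every representative tree to a corolla labeled by the iterated partial composition of its vertex labels, which is precisely $\eta\pi$. The verification that $H_\lambda$ descends to the quotient is immediate for the symmetry and length-zero contraction relations; for the unit relation, compatibility with the fusion formula $s+t-st$ requires a mild reparametrization of $\lambda$.

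For (ii), I would exhibit $W\mathcal P$ as a sequential colimit of cofibrations. Filter $W\mathcal P(k)$ by the subspaces $W_n\mathcal P(k)$ of classes representable by a \emph{reduced} tree (no unary vertices labeled by $\iota$) with at most $n$ internal edges. Each inclusion $W_n\mathcal P\hookrightarrow W_{n+1}\mathcal P$ fits into a pushout attaching, for each reduced tree $\tau$ with exactly $n+1$ internal edges, a cell of shape $[0,1]^{E(\tau)}$ twisted by $\prod_{v\in V(\tau)}\mathcal P(star(v))$, via an attaching map defined on the boundary of the cube: the faces where some edge has length $0$ map to $W_n\mathcal P$ by edge contraction, while the faces where some edge has length $1$ map to $W_n\mathcal P$ by unit absorption. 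The hypotheses that each $\mathcal P(n)$ is a cofibrant $S_n$-space and that $\{\iota\}\hookrightarrow\mathcal P(1)$ is a cofibration ensure that the attaching maps are genuine cofibrations.

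The main obstacle is the unit relation with its fusion formula $s+t-st$: it couples representatives of different filtration levels in a non-combinatorial way, so the naive filtration by number of internal edges is not by itself cellular. The standard remedy, which I would adopt, is to restrict to reduced representatives and to verify carefully that the length-$1$ faces of each cube identify with lower-dimensional cells via unit absorption; this is exactly where the cofibrancy of $\{\iota\}\hookrightarrow\mathcal P(1)$ is used. As a cleaner alternative one could invoke the axiomatic treatment of the $W$-construction developed by Berger–Moerdijk for general monoidal model categories, which subsumes this verification.
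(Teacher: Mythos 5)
The paper does not prove this proposition at all: it is quoted from Boardman--Vogt (and in practice one would cite Berger--Moerdijk's axiomatic treatment, as you suggest at the end), so there is no in-paper argument to compare with. Judged on its own, your outline follows the standard two-step strategy (deformation retraction for the augmentation, cellular filtration for cofibrancy), but both steps contain a concrete flaw as written. For (i), the homotopy that rescales every edge length by $\lambda$ does \emph{not} descend to the quotient, and a ``mild reparametrization of $\lambda$'' cannot repair it: well-definedness against the unit relation forces the length-rescaling functions $\phi_\lambda$ to satisfy $\phi_\lambda(s+t-st)=\phi_\lambda(s)+\phi_\lambda(t)-\phi_\lambda(s)\phi_\lambda(t)$, i.e.\ to be additive after the substitution $s\mapsto -\log(1-s)$, hence of the form $\phi_\lambda(s)=1-(1-s)^{c(\lambda)}$; any such family with $\phi_1=\mathrm{id}$ and $\phi_0\equiv 0$ is jointly discontinuous at $(s,\lambda)=(1,0)$. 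The usual ways out are to work with the $\max$ join (for which naive scaling is compatible), to define the retraction on reduced representatives and check it is continuous for the quotient topology, or to run the abstract interval-object argument of Berger--Moerdijk; some such choice has to be made explicitly, since this is precisely the point where the proof is not formal.

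For (ii), the mechanism you assign to the length-$1$ faces is the wrong one. A length-$1$ internal edge exhibits the tree as an operadic grafting of two smaller trees, so those faces lie in the suboperad generated by the previous filtration stage by \emph{freeness} of the grafting structure --- no unit is involved. The hypothesis that $\{\iota\}\hookrightarrow\mathcal P(1)$ is a cofibration is needed elsewhere: the relation deletes unary vertices labeled by $\iota$ (unary vertices with arbitrary labels in $\mathcal P(1)$ are allowed and are not removable, so ``reduced trees'' cannot serve as a labeling-independent cell index), and the attaching map of the cell for a tree with unary vertices must therefore also include the locus where such a label equals $\iota$; cofibrancy of the unit inclusion (together with $S_n$-cofibrancy of the $\mathcal P(n)$, which handles tree automorphisms) is what makes that inclusion of the boundary a cofibration. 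Finally, the filtration should be organized so that each stage is a pushout of free-operad maps applied to these equivariant cofibrations of symmetric sequences, since the claim is cofibrancy as an operad, not merely arity-wise; you gesture at this but the cell description needs to be set up at that level for the argument to close.
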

\end{construction}

\begin{construction}[$W$-construction for modules]
In the above notation (see~\hr[Construcion]{boardman-resolution}) let $\mathcal M$ be a module over~$\mathcal P$. For each tree~$\tau$ denote by $\star \in V(\tau)$ its root. 

Define $\mathcal T^{\mathcal P}_k(\mathcal M)$ in the same manner:
\[
    \mathcal T^{\mathcal P}_k(\mathcal M)
    \coloneqq
    \bigsqcup_{\tau \in Tree_k} \Big(\mathcal M\big(star(\star)\big)\times \hspace{-1em}\prod_{v \in V(\tau)\setminus\star} \hspace{-1em} \mathcal P\big(star(v)\big)\times \hspace{-0.5em}\prod_{e\in E(\tau)} [0,1]\Big).
\]
The space $W^{\mathcal P}\!\mathcal M(k)$ is the quotient of $\mathcal T^{\mathcal P}_k(\mathcal M)$ by the same relation with a modification of the second one:
\begin{itemize}[leftmargin=0.03\textwidth]
\item If $\tau$ has an edge $e$ of length~$0$ incoming to the root, then $\tau$ is equivalent to the tree obtained by contracting~$e$ and (partially) acting on the module label with the corresponding operadic label of the second vertex.
\end{itemize}
We will omit the superscript whenever it is clear from the context.

Note that $W\mathcal M(k)$ can be viewed as two-levelled trees with the root decorated by $\mathcal M$ and the second level decorated by $W\mathcal P$. Naturally $W\mathcal M$ has a structure of right $W\mathcal P$-module defined again by grafting trees, and by assigning length~$1$ to the new internal edges.

The map $W\mathcal M \to \mathcal M$ sending a tree $\tau$ to the multicomposition of its vertex labels is a morphism of right $W\mathcal P$-modules. Moreover, it is an arity-wise homotopy equivalence given by contracting edges. 
\end{construction}

{

}

\subsection{Versions of the Fulton-MacPherson operad}
For a manifold $M^m$ let $\conf_k(M)$, $k\geqslant 0$, denote the configuration space
\[
    \conf_k(M) \coloneqq \{(x_1, \ldots, x_k) \in M^{\times k} \,|\; \text{$x_i \neq x_j$ for $i \neq j$}\}.
\]
Let $\mathcal F_M(k)$ be its Axelrod-Singer-Fulton-MacPherson completion (see~\cite{sinh04} for a thorough description). It is a manifold with corners whose interior is $\conf_k(M)$. The boundary strata consist of configuration where some of the points collided. When $M = \R^m$ we obtain the \emph{Fulton-MacPherson operad} $\mathcal F_m$ (see~\cite{ge-jo94, salv01}). If a manifold $N$ has dimension greater or equal to~$m$ we define an $m$-framed version of $\mathcal F_N(k)$ to be a space $\mathcal F^{m\text{-fr}}_N(k)$ which fibres over $\mathcal F_N$ with a fibre over a point $x \in \mathcal F_N(k)$ being the space of tuples $(\alpha_1, \ldots, \alpha_k)$, where $\alpha_i\colon \R^m \to T_{p_i(x)}N$ is a linear injective map. Here $p_i\colon F_N(k) \to N$, $0 \leqslant i \leqslant k$, is the projection to the $i$-th point.
The sequences $\mathcal F^{\text{fr}}_M \coloneqq \mathcal F^{m\text{-fr}}_M$, $\mathcal F^{m\text{-fr}}_N$ are right $\mathcal F^{\text{fr}}_m$-modules. The arity zero component acts by forgetting points in configurations. An element $x \in \mathcal F_m^{\text{fr}}(k)$ acts by replacing a point in a configuration by the infinitesimal configuration~$x$ according to the framing. For a parallelised manifold $M$ the sequence $\mathcal F_M$ is naturally a right $\mathcal F_m$-module. The same holds if a manifold $N$ admits an $m$-frame.

It was shown by Salvatore that $\mathcal F_m$ is weakly equivalent to the little discs operad~$L\mathbb D_m$.

\begin{proposition}[{\cite[Proposition 3.9]{salv01}}]\label{salv-w-eq}
There is a zigzag of homotopy equivalences
\begin{center}
\begin{tikzcd}
    L\mathbb D_m
    &
    W(L\mathbb D_m)
    \arrow[l, swap, "\sim"]
    \arrow[r, "\sim"]
    &
    \mathcal{F}_m.
\end{tikzcd}
\end{center}
\end{proposition}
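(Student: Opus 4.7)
The plan is to construct both maps explicitly and establish each is a weak equivalence; a two-out-of-three argument then assembles the zigzag. The left map $W(L\mathbb D_m) \to L\mathbb D_m$ is the general augmentation from \hr[Construction]{boardman-resolution}: it sends every internal edge length to $0$ and multicomposes the vertex labels. It is a weak equivalence by the Boardman-Vogt theorem, whose hypotheses hold for $L\mathbb D_m$ since the unit inclusion $\{\iota\} \hookrightarrow L\mathbb D_m(1)$ is a cofibration and each $L\mathbb D_m(n)$ is a cofibrant $S_n$-space.

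For the right map I would construct an explicit geometric comparison $\Phi\colon W(L\mathbb D_m)(k) \to \mathcal F_m(k)$ by recursion on the depth of the tree. To a decorated tree with vertices labeled by little-disc configurations and internal edges labeled by lengths $t_e \in [0,1]$, $\Phi$ associates a (compactified) configuration of $k$ points in $\R^m$ as follows: the root configuration provides the ambient positions of the top-level clusters, and each child subtree is recursively placed inside the little disc of its parent vertex, rescaled by a factor determined by the edge length. Edges of length $1$ produce an honest configuration in the interior $\conf_k(\R^m)$, while edges of length $0$ collapse a subtree onto the corresponding collision stratum of $\mathcal F_m$. One checks that the three defining relations of the $W$-construction are absorbed by $\mathcal F_m$: the symmetric group action is evident, edges of length $0$ hit the stratification indexed by the contracted tree, and the absorption of arity-one unit vertices via the length law $(s,t) \mapsto s + t - st$ reflects the additivity of ``fractional resolution'' along a path.

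The main obstacle is verifying that $\Phi$ is an arity-wise weak equivalence. A direct attack would proceed by stratum analysis: on the open stratum of trees with all internal edge lengths strictly positive, $\Phi$ restricts to a surjection onto $\conf_k(\R^m)$ whose fibres are products of open intervals parametrizing rescaling data for each combinatorial tree type, and the tree-type stratification of $W(L\mathbb D_m)(k)$ matches bijectively the collision stratification of $\mathcal F_m(k)$. A slicker route, which is essentially Salvatore's own strategy, is to observe that the composite
\[
    W(L\mathbb D_m) \xrightarrow{\Phi} \mathcal F_m \to L\mathbb D_m,
\]
where the rightmost arrow is Kontsevich's comparison between the Fulton-MacPherson and little-discs operads, is homotopic to the augmentation; two-out-of-three then yields the claim. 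The delicate point is exhibiting this homotopy compatibly on all arities and through the operadic structure; this coherence check is the technical core of \cite{salv01}.
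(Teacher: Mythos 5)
The paper does not prove this statement; it is quoted from Salvatore, and the closest in-paper analogue of his argument is the later proposition comparing $\widetilde\emb(\phantom U,N)$ with $\widetilde{\mathcal F}_N$, where the map out of the $W$-construction rescales the disc labelled at an edge of length $t$ by the dilation $m_{1-t}$ and then records centers. Measuring your sketch against that, your construction of the right-hand map has the interval running the wrong way, and this is not a harmless convention choice: in the $W$-construction used here, an edge of length $0$ is \emph{identified} with the contracted tree whose labels are operadically composed in $L\mathbb D_m$, and composition of little-disc configurations is an honest configuration, so a length-$0$ edge must map to the interior $\conf_k(\R^m)$, not to a collision stratum. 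Conversely, grafting (the operad structure on $W(L\mathbb D_m)$) creates edges of length $1$, and compatibility with the $\mathcal F_m$-composition forces length-$1$ edges to be the ones landing in the boundary strata (this is exactly the limit $t_i\to 1$, scale factor $1-t_i\to 0$, in the paper's proof). With your assignment (length $1$ interior, length $0$ strata) the map does not descend to the quotient defining $W(L\mathbb D_m)$ and is not a morphism of operads, so the claimed compatibility check fails as stated.

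The second genuine problem is your ``slicker route'': there is no comparison map of operads $\mathcal F_m\to L\mathbb D_m$ (Kontsevich's work provides no such arrow; the absence of any direct comparison is precisely why the zigzag through $W(L\mathbb D_m)$ is needed), and even granting an arity-wise map, concluding by two-out-of-three would require already knowing that map is an equivalence, which is the content of the proposition — the argument is circular. The non-circular version of your idea points the other way: precompose $\Phi$ with the arity-wise section $L\mathbb D_m(k)\hookrightarrow W(L\mathbb D_m)(k)$ given by corollas (a homotopy equivalence because the counit is), and observe that the composite is the centers map $L\mathbb D_m(k)\to\conf_k(\R^m)\subset\mathcal F_m(k)$, which is a homotopy equivalence since $\mathcal F_m(k)$ is a compact manifold with corners with interior $\conf_k(\R^m)$; two-out-of-three then gives that $\Phi$ is an arity-wise equivalence. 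Your alternative ``stratum analysis'' is also too quick — the preimage of the interior is not a union of open tree strata with interval fibres; the fibres involve all nested presentations of a configuration by discs and radii, and identifying them (or otherwise proving the equivalence) is the actual technical content you cannot simply defer while claiming a proof.
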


In this paper we also consider the following version of $\mathcal F_M$.
Suppose that $N$ admits an $m$-frame $\sigma_{std}\colon N \to \Frm(N)$.
We define a \emph{path $m$-framed} version of $\mathcal F_N(k)$ to be a space $\widetilde{\mathcal F}_N(k)$ which fibres over $\mathcal F^{m\text{-fr}}_N(k)$ with a fibre over a point $x\in\mathcal F^{m\text{-fr}}_N(k)$ being the space of tuples $(h_1, \ldots, h_k)$, where $h_i\colon [0,1] \to \Gamma\big(p_i(x), \Frm(N)\big)$ is a path in $m$-frames of~$N$ over $p_i(x)$ starting at the given frame~$\sigma_{std}$.  Then $\widetilde{\mathcal F}_N$ is naturally a right $\mathcal F_m$-module.

\subsection{Model structure on modules over an operad}\label{model-modules}

Let $\mathcal P$ be a topological operad, and suppose that $\mathcal P$ is \emph{$\top$-cofibrant} meaning that $\big\{\mathcal P(r)\big\}_{r \in \Z_{\geqslant 0}}$ consists of cofibrant spaces.
The category $\Mod_{\mathcal P}$ of right $\mathcal P$-modules admits a model structure so that a morphism $f\colon \mathcal M \to \mathcal N$ is a weak equivalence (resp. fibration) if the morphisms $f\colon \mathcal M(r) \to \mathcal N(r)$ are weak equivalences (resp. fibrations) in $\top$.

Let $f\colon \mathcal P \to \mathcal Q$ be a morphism of $\top$-cofibrant operads. If we equip the categories of modules $\Mod_{\mathcal P}$ and $\Mod_{\mathcal Q}$ with the above model structure, we have the following Quillen adjunction.

\begin{theorem}[{see~\cite[Theorem~16.B]{fres09}}]\label{model-ind-res}
The induction and restriction functors
\begin{equation}\label{q-ad}
    \ind^{\mathcal Q}_{\mathcal P}\colon \Mod_{\mathcal P} \rightleftarrows \Mod_{\mathcal Q}\rcolon \res^{\mathcal Q}_{\mathcal P}
\end{equation}
define a Quillen adjunction. Moreover, if $f\colon \mathcal P \to \mathcal Q$ is a weak equivalence, then \eqref{q-ad} is a Quillen equivalence.
\end{theorem}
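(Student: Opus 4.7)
The plan is to transport the derived mapping space of right $\mathcal F_m$-modules into the world of dg-Hopf-comodules over a cooperadic model of $\mathcal F_m$, where the Kontsevich/Lambrechts--Voli\'c/Campos--Willwacher--Idrissi graph complex models are available, and then identify the resulting mapping space with the Maurer--Cartan space of a convolution dg Lie algebra whose underlying complex is exactly $\hgc^Z_{A_M, H^{\sbullet}(N), n}$. This is the route used in~\cite{ar-tu14, f-t-w20} in the $N = \R^n$ case; the novelty is that $N$ is now an arbitrary parallelized manifold, which forces the target decoration $H^{\sbullet}(N)$ to appear on the hairs of the graphs.

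First I would replace the topological operad $\mathcal F_m$ by a Hopf cooperad model. Using formality of the little discs operad (equivalently, of $\mathcal F_m$) over $\Q$, pick a zig-zag of quasi-isomorphisms between a Hopf cooperad of PA forms $\Omega^{\sbullet}_{PA}(\mathcal F_m)$ and the graph cooperad $\Graphs_m$, whose cohomology is the $e_m$-cooperad. For the two modules, invoke the real/rational formality of configuration spaces on parallelized manifolds to produce dg-Hopf-comodule models $\Graphs_M$ and $\Graphs_N$ of $\mathcal F_M$ and $\mathcal F_N^{\Q}$ over $\Graphs_m$; parallelizability collapses the decoration of the graph comodules to a CDGA $A_M$ of $M$ and, after using formality of $N$ (or equivalently applying the $A_\infty$-structure of a Poincar\'e-duality model), to $H^{\sbullet}(N)$ on the target side. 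The superscript $Z$ records the propagator/partition-function twist arising from the chosen smooth formality data.

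Next I would translate the derived mapping space $\map^h_{\mathrm{mod}\text-\mathcal F_m}(\mathcal F_M, \mathcal F_N^{\Q})$ into a derived mapping space of right $\Graphs_m$-Hopf-comodules between $\Graphs_M$ and $\Graphs_N$. This step uses the module-level analogue of the bar/cobar Quillen equivalence between topological right $\mathcal F_m$-modules and dg right $\Graphs_m$-comodules, together with Theorem~\ref{model-ind-res} (to move between zig-zag-equivalent operads) and a cofibrant replacement, for which the $W^{\mathcal F_m}\mathcal F_M$ construction of the previous section is the natural candidate. Having arrived in the cooperadic world, I would then follow the standard formalism: a cofibrant resolution of the source plus a fibrant replacement of the target let one compute the mapping space as the nerve of the Deligne groupoid of the convolution dg Lie algebra $\mathrm{Conv}(\Graphs_M, \Graphs_N)$. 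A direct inspection identifies this convolution Lie algebra, up to the expected shift, with the hairy graph complex $\hgc^Z_{A_M, H^{\sbullet}(N), n}$: internal vertices and edges of a graph come from $\Graphs_m$, hairs decorated by $H^{\sbullet}(N)$ encode the target configuration points, the $A_M$ coefficients encode integration against the source model, and the Lie bracket is edge-contraction/insertion. Consequently $\map^h_{\mathrm{mod}\text-\mathcal F_m}(\mathcal F_M, \mathcal F_N^{\Q}) \simeq \mc(\hgc^Z_{A_M, H^{\sbullet}(N), n})$.

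The hard part will be Step two--three: justifying that the Hopf-comodule machinery of Campos--Willwacher--Idrissi can be used to model the derived mapping space of \emph{topological} right $\mathcal F_m$-modules, and that the codimension assumption $n-m \geqslant 2$ is exactly what guarantees both convergence of the propagator integrals defining $\Graphs_M, \Graphs_N$ and the collapse of Koszul-theoretic obstructions so that the convolution complex gives the correct derived mapping space. Once this comparison is in place, the combinatorial identification with $\hgc^Z_{A_M, H^{\sbullet}(N), n}$ is essentially bookkeeping, and the Maurer--Cartan statement follows from the standard Hinich/Getzler formalism applied to the resulting pronilpotent dg Lie algebra.
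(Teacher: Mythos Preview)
Your proposal does not address the stated theorem. The statement you were asked to prove is Theorem~\ref{model-ind-res}: that for a morphism $f\colon \mathcal P \to \mathcal Q$ of $\top$-cofibrant operads, the induction--restriction pair $\ind^{\mathcal Q}_{\mathcal P} \dashv \res^{\mathcal Q}_{\mathcal P}$ is a Quillen adjunction, and a Quillen equivalence when $f$ is a weak equivalence. Instead, you have written a proof sketch for Theorem~\ref{main-computation} (the identification of $\map^h_{\mathrm{mod}\text-\mathcal F_m}(\mathcal F_M, \mathcal F_N^{\Q})$ with a Maurer--Cartan space of a hairy graph complex). You even \emph{invoke} Theorem~\ref{model-ind-res} as an ingredient in your argument, so you are clearly not proving it.

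In the paper, Theorem~\ref{model-ind-res} carries no proof at all: it is quoted from~\cite[Theorem~16.B]{fres09}. If a proof were required, it would be a general model-categorical argument (verify that $\res^{\mathcal Q}_{\mathcal P}$ preserves fibrations and acyclic fibrations since these are defined aritywise and restriction does not change underlying spaces; for the Quillen equivalence part, check that the derived unit and counit are weak equivalences using that induction is a relative composition product and that $\mathcal P \xrightarrow{\sim} \mathcal Q$ induces aritywise equivalences on free modules). None of this has anything to do with graph complexes, formality, or Maurer--Cartan spaces.

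As a side remark on what you actually wrote: your sketch for Theorem~\ref{main-computation} is broadly in the right spirit and close to the paper's argument, though you have the roles of the decorations reversed --- in $\hgc_{A_M, H^{\sbullet}(N), n}$ the internal vertices are decorated by $H^{\sbullet}(N)$ and the hairs by $A_M^*$, not the other way around --- and the paper uses the $\cores$--$\coind$ adjunction through $\Com^c$ (exploiting $n-m\geqslant 2$ so that $e_n^c \to e_m^c$ factors through $\Com^c$) rather than a bar/cobar comparison of topological modules with Hopf comodules directly.
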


\subsection{Algebraic $W$-construction}
Here we briefly recall the algebraic version of $W$-construction (see~\cite[Section~5]{f-t-w17}) and introduce its module version.

\begin{construction}[$W$-construction for dg Hopf cooperads~{\cite[Construction~5.1]{f-t-w17}}]\label{w-cooperads}
Let $\mathcal C$ be a dg Hopf cooperad with $\mathcal C(0) = 0$ and $\mathcal C(1) = \Q$. Denote by $\bar{\mathcal C}$ its coaugmentation coideal which is given by $\overline{\mathcal C}(0) = \overline{\mathcal C}(1) = 0$ and $\overline{\mathcal C}(r) = \mathcal C (r)$ for $r \geqslant 2$. For finiteness conditions we consider now the set $Tree'_k \subseteq Tree_k$ formed by trees whose vertices have at least two incoming edges.
As before, we define the $W$-construction in two steps, essentially by just dualising objects.

We start from the algebra $\mathcal T_k(\mathcal C)$ of decorations of ordered trees with~$k$ leaves where the vertices are decorated by the cooperad~$\mathcal C$ and the edges are decorated by polynomial forms~$\Q[t, \mathrm dt]$ on the unit interval:
\[
	\mathcal T_k(\mathcal C) \coloneqq \prod_{\tau \in Tree'_k} \Big(\bigotimes_{v \in V(\tau)} \mathcal C\big(star(v)\big)\otimes \bigotimes_{e\in E(\tau)} \Q[t, \mathrm dt]\Big).
\]
The algebra $W\mathcal C(k)$ is the subalgebra of decorations of $T_k(\mathcal C0)$ satisfying the following properties:
\begin{itemize}[leftmargin=0.03\textwidth]
	\item (Equivariance condition) The obvious modification of the first relation in~\hr[Construction]{boardman-resolution}.
	
	\item (Contraction condition) Let $e \in E(\tau)$ be an internal edge of~$\tau$. Denote by $v$ the vertex of $\tau/e$ obtained by contracting the edge~$e$. Then the values of decoration $\xi$ on $\tau$ and $\tau/e$ are related by the formula
	\[
		\Delta_e \xi_{\tau/e} = \mathrm{ev}^e_{t = 0}\xi_{\tau},
	\]
	where $\Delta_e$ denotes the cocomposition applied to the vertex $v$ in $\xi_{\tau/e}$ and $\mathrm{ev}^e_{t = 0}$ is the evaluation at $t=0$ applied to the edge $e$ in $\xi_\tau$.
\end{itemize}
The differential on $W\mathcal C$ is induced by the differentials on $\mathcal C$ and $\Q[t, \mathrm dt]$. The commutative algebra structure is given by the pointwise multiplication of the decorations $\xi\colon \tau \to \xi_{\tau}$ in the commutative dg algebras $\bigotimes_{v \in V(\tau)} \mathcal C\big(star(v)\big)\otimes \bigotimes_{e\in E(\tau)} \Q[t, \mathrm dt]$.

The cocompostion on $W\mathcal C$ is defined by a set of maps
\[
	\Delta_*\colon W\mathcal C(k) \to W\mathcal C(k'+1)\otimes W\mathcal C(k''),
\]
for each decomposition $k = k' + k''$.
Note that the target is spanned by decorations defined on pairs of trees $(\tau', \tau'') \in Tree'_{k'+1}\times Tree'_{k''}$ which satisfy above conditions with respect to both variables $\tau'$ and $\tau''$. Finally, for $\xi \in W\mathcal C(k)$ we set
\[
	\Delta_*\xi (\tau', \tau'') \coloneqq \mathrm{ev}^{e_*}_{t = 1} \xi(\tau'\circ_*\tau'').
\]
Here $\tau'\circ_*\tau''$ is the tree obtained by grafting the root of $\tau''$ to the leaf of $\tau$ indexed by~$*$, and $\mathrm{ev}^{e_*}_{t = 1}$ is the evaluation of internal edge~$e_*$ produced by grafting.
\end{construction}

There is a canonical morphism of dg Hopf cooperads $\rho\colon \mathcal C \to W\mathcal C$. It takes an element $c \in \mathcal C(k)$ to the decoration such that
$\rho(c)(\tau) = \overline\Delta_\tau(c)\otimes 1^{\otimes E(\tau)}$.
Here $\overline\Delta_\tau(c)$ is the reduced tree-wise coproduct of~$c$, and we take the constant edge decoration being equal to~$1$.

\begin{proposition}[{\cite[Section~5]{f-t-w17}}]
Let $\mathcal C$ be a reduced dg Hopf $\Lambda$-cooperad. Then there is a natural $\Lambda$-structure on $W\mathcal C$ such that the morphism
\[
	\rho\colon \mathcal C \to W\mathcal C
\]
defines a fibrant resolution of dg Hopf $\Lambda$-cooperads.
\end{proposition}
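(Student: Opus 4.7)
The plan is to extend the cooperadic $W$-construction to the $\Lambda$-enriched setting in a vertex-local way, then check separately the two defining features of a fibrant resolution: the quasi-isomorphism property and fibrancy.

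First I would construct the $\Lambda$-structure on $W\mathcal C$ explicitly. A $\Lambda$-structure on a reduced cooperad $\mathcal C$ amounts to the data of corestriction maps $\mathcal C(r) \to \mathcal C(r-1)$, one for each leaf one wishes to forget, compatible with cocompositions. Given $\xi \in W\mathcal C(k)$ and a leaf~$\ell$, the natural definition applies the corresponding corestriction on the decoration at the vertex $v_\ell$ supporting~$\ell$, leaves every other vertex and edge decoration untouched, and performs the standard clean-up if $v_\ell$ becomes bivalent or univalent (using the counit of $\Lambda$ on the reduced piece and merging edge parameters as in \hr[Construction]{boardman-resolution}). The only nontrivial compatibility to verify is that this assignment preserves the contraction condition of \hr[Construction]{w-cooperads}; this reduces to the defining compatibility of $\Lambda$-corestrictions with the cocompositions $\Delta_e$ on $\mathcal C$. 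Since $\rho(c)(\tau) = \overline{\Delta}_\tau(c)\otimes 1^{\otimes E(\tau)}$ acts through the same vertex-local insertion of $\mathcal C$-decorations, $\rho$ is then automatically a morphism of dg Hopf $\Lambda$-cooperads.

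Next I would prove that $\rho$ is a quasi-isomorphism arity by arity. Filter $W\mathcal C(k)$ by the number of internal edges of the underlying tree. On the associated graded the contraction conditions decouple the edge data from the vertex data, so the differential restricts to the de~Rham differential on each edge's polynomial form $\Q[t, \mathrm{d}t]$. Because $\Q[t, \mathrm{d}t]$ is acyclic, every stratum with a positive number of edges contributes zero to the cohomology of the associated graded, and the spectral sequence collapses onto the corolla stratum, which is canonically $\mathcal C(k)$. One then checks that the induced identification $H^\ast(W\mathcal C(k)) \cong \mathcal C(k)$ agrees with the map induced by $\rho$.

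Fibrancy is the step I expect to be the main obstacle. In Fresse's model structure on dg Hopf $\Lambda$-cooperads, fibrations are controlled not merely arity-wise but through matching-type conditions that encode the $\Lambda$-corestrictions and the cocompositions. To establish that the map from $W\mathcal C$ to the terminal object is a fibration, I would present $W\mathcal C(k)$ as a Reedy-type inverse limit over $Tree'_k$ with structure maps induced by edge contraction and by the $\Lambda$-corestrictions built above: the contraction condition of \hr[Construction]{w-cooperads} is precisely the matching condition for this tower. Surjectivity of the matching maps in each step reduces to the surjectivity of the evaluation $\mathrm{ev}_{t=0}\colon \Q[t, \mathrm{d}t] \twoheadrightarrow \Q$, since one can always lift a decoration on a contracted tree to one on the original tree by choosing a polynomial form with the prescribed value at $t=0$. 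Combined with the $\Lambda$-compatibility from the first step, this presents $W\mathcal C$ as an iterated pullback of fibrations with fibrant fibers, which yields fibrancy in the dg Hopf $\Lambda$-cooperad model structure.
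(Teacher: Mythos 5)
First, a point of reference: the paper does not prove this proposition at all — it is quoted from \cite[Section~5]{f-t-w17} — so your proposal has to be measured against that cited construction rather than an argument in the text. Your overall architecture (an explicit vertex-local $\Lambda$-structure, a quasi-isomorphism argument over tree strata using the acyclicity of the edge forms, fibrancy via surjectivity of matching-type maps exploiting the freedom in the edge coordinates) is the right shape. But there is a genuine error at the first step: the variance of the $\Lambda$-structure. For dg Hopf $\Lambda$-cooperads the $\Lambda$-operations are \emph{arity-increasing}, $u_*\colon \mathcal C(r) \to \mathcal C(s)$ for injections $u\colon \{1,\dots,r\}\hookrightarrow\{1,\dots,s\}$; they are dual to the point-forgetting restriction operators on the operad side — compare the paper's own description of $\Graphs_n$, whose $\Lambda$-operations $\Graphs_n(r)\to\Graphs_n(r+1)$ add a zero-valent external vertex. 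Your ``corestriction maps $\mathcal C(r)\to\mathcal C(r-1)$, forget a leaf'' is the operadic direction, so the operations you construct on $W\mathcal C$ are not the required structure. The correct construction goes the other way: to define $u_*\xi$ on a tree $\tau$ with an added leaf, delete that leaf from $\tau$ (this is where your clean-up belongs: a vertex that becomes unary is removed and its two edge forms are merged via pullback along $(s,t)\mapsto s+t-st$), evaluate $\xi$ on the resulting smaller tree, and then apply the arity-increasing corestriction of $\mathcal C$ at the vertex that carried the deleted leaf. Your compatibility check with the contraction condition survives this dualization, but as written you have built the wrong maps.

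The fibrancy step also has a gap beyond being a sketch. You declare that the contraction condition of \hr[Construction]{w-cooperads} ``is precisely the matching condition,'' but that condition is part of the \emph{definition} of $W\mathcal C$, not the fibrancy condition in the model category of dg Hopf $\Lambda$-cooperads; fibrancy there is formulated against matching data built from the $\Lambda$-corestrictions (this is exactly why the statement requires $\mathcal C$ to be a $\Lambda$-cooperad and why $\mathcal C$ itself, though arity-wise a fibrant dg algebra, is not already fibrant). Since your $\Lambda$-operations have the wrong variance, the tower you propose over $Tree'_k$ cannot be identified with the relevant matching tower, and the reduction to surjectivity of $\mathrm{ev}_{t=0}\colon \Q[t,\mathrm dt]\twoheadrightarrow\Q$ — the right ingredient — is aimed at the wrong maps. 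A smaller issue of the same kind occurs in the quasi-isomorphism step: $W\mathcal C(k)$ is a subalgebra of the product over trees cut out by the equivariance and contraction conditions, so the associated graded of your edge-number filtration does not split with the corolla as a direct summand; one must argue through the limit description (induction over edge contractions), as is done in \cite{f-t-w17}.
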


\begin{construction}[$W$-construction for comodules]
In the above notation (see~\hr[Construction]{w-cooperads}) let $\mathcal M$ be a $\mathcal C$-comodule. As before, we define
\[
	\mathcal T^{\mathcal C}_k(\mathcal M) \coloneqq \prod_{\tau \in Tree'_k}
	\Big(\mathcal M\big(star(\star)\big)\otimes \bigotimes_{v \in V(\tau)\setminus \star} \mathcal C\big(star(v)\big) \otimes \bigotimes_{e\in E(\tau)} \Q[t, \mathrm dt]\Big).
\]
The $W$-construction $W^{\mathcal C}\mathcal M(k)$ is the subalgebra of $T^{\mathcal C}_k(\mathcal M)$ satisfying the same relations as above with an modification of the second:
\begin{itemize}[leftmargin=0.03\textwidth]
	\item (Contraction condition) If $e \in E(\tau)$ is an internal edge of~$\tau$ incoming to the root, then the values of decoration $\xi$ on $\tau$ and $\tau/e$ are related by the formula
	\[
	\Delta_{M, e} \xi_{\tau/e} = \mathrm{ev}^e_{t = 0}\xi_{\tau},
	\]
	where $\Delta_{M, e}$ denotes the $\mathcal C$-coaction applied to the root~$\star$ in $\xi_{\tau/e}$.
\end{itemize}
The differential on $W\mathcal M$ is induced by the differentials on $\mathcal M$, $\mathcal C$ and $\Q[t, \mathrm dt]$. The commutative dg algebra structure is again given by the pointwise multiplication.

The $W\mathcal C$-coaction is defined by the same formula as above. 
\end{construction}

An obvious modification of the proof of~\cite[Proposition~5.2]{f-t-w17} leades to the following proposition.

\begin{proposition}
	The canonical morphism $\rho\colon \mathcal M \to W\mathcal M$ is a weak-equivalence of $W\mathcal C$-comodules.
\end{proposition}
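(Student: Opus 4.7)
The plan is to follow closely the argument that establishes Proposition~5.2 of \cite{f-t-w17} in the cooperadic case, adapting each step to the comodule setting. Step one is to verify that $\rho$ is well-defined: the equivariance condition on decorations is automatic from the definition of the tree-wise coaction, the contraction condition at the root-incoming edge reduces to the coassociativity of the $\mathcal{C}$-coaction on $\mathcal{M}$, and the contraction conditions at the remaining internal edges follow, exactly as in the cooperadic case, from the coassociativity of $\mathcal{C}$. Compatibility of $\rho$ with the $W\mathcal{C}$-coaction is a direct unwinding of definitions.

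Step two is to equip $W^{\mathcal{C}}\mathcal{M}(k)$ with a decreasing filtration $F^{\sbullet}$ indexed by the number of non-root vertices of the underlying tree. The image of $\rho$ lies in the bottom layer, corresponding to the one-vertex corolla decorated by an element of $\mathcal{M}(k)$. On each successive layer of the associated graded the edge decorations are free polynomial forms in $\Q[t,\mathrm dt]$ constrained only by evaluations at $t=0$ through the adjacent vertex labels.

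Step three is to build a contracting homotopy using the standard acyclicity $\Q[t,\mathrm dt] \xrightarrow{\mathrm{ev}_{t=0}} \Q$, applied one edge at a time, starting from the leaves and proceeding toward the root. At each step the contraction condition turns the frozen edge into a (co)composition: for an internal non-root edge it produces a cocomposition of $\mathcal{C}$, while for the root-incoming edge it produces the coaction $\Delta_M$. Iterating down to the single-vertex corolla yields a chain-level deformation retract onto the image of $\rho$, proving that $\rho$ is a quasi-isomorphism.

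The main obstacle is the bookkeeping around the distinguished root: one has to check that collapsing edges in any admissible order produces the same decoration at the end, and that the homotopy above is compatible with the differentials inherited from $\mathcal{C}$, $\mathcal{M}$ and $\Q[t,\mathrm dt]$. Both points reduce to coassociativity of the $\mathcal{C}$-coaction on $\mathcal{M}$ together with its compatibility with the cocomposition of $\mathcal{C}$. Once this is in place, either the explicit homotopy or the spectral sequence associated with $F^{\sbullet}$ (which then collapses at $E_1$) delivers the claim that $\rho$ is a weak equivalence of $W\mathcal{C}$-comodules.
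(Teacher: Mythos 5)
Your proposal is correct and follows essentially the same route as the paper, which itself only asserts that the statement follows by an obvious modification of the proof of \cite[Proposition~5.2]{f-t-w17}: you check that $\rho$ is a well-defined comodule map via coassociativity of the coaction and its compatibility with the cocomposition, filter $W\mathcal M(k)$ by the number of vertices of the underlying trees (a finite filtration thanks to the restriction to $Tree'_k$), and use the acyclicity of $\ker\big(\mathrm{ev}_{t=0}\colon \Q[t,\mathrm dt]\to\Q\big)$ on the associated graded, or the corresponding explicit homotopy. One minor imprecision only: since $W\mathcal M(k)$ is a subalgebra of a product over trees, $\rho(c)$ has nonzero components on every tree, so it is not that the image of $\rho$ lies in the bottom layer, but rather that the projection onto the corolla component identifies the bottom graded piece with $\mathcal M(k)$ and retracts $\rho$, which is exactly what the collapsing spectral sequence shows.
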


\subsection{Graph complexes and graph operads} 
We briefly recall the definition of Kontsevich graph cooperad $\Graphs_n$ (see~\cite{kont99}).
An \emph{admissible graph} with~$r$ external and~$k$ internal vertices is an undirected graph such that
\begin{itemize}[leftmargin=0.03\textwidth]
	\item the external vertices are numbered by~$1, \ldots, r$;
	
	\item there is at least one external vertex in every connected component;
	
	\item every internal vertex has valence at least~$3$.
\end{itemize}
Tadpoles and multiple edges are allowed. Here is an example of an admissible graph.
\[
\begin{tikzpicture}
	\node[ext] (v1) at (0,0) {1};
	\node[ext] (v2) at (0.5,0) {2};
	\node[ext] (v3) at (1,0) {3};
	\node[ext] (v4) at (1.5,0) {4};
	\node[ext] (v5) at (2,0) {5};
	\node[int] (w1) at (0.5,.7) {};
	\node[int] (w2) at (1.0,.7) {};
	\draw (v1) to [out=50,in=130] (v2) (v1) edge (w1)  (v2) edge (w1) edge (w2) (v3) edge (w1) edge (w2) (v4) edge (w2) (w1) edge (w2) (v5) to [out=110,in=70, loop] (v5) (w1) to [loop] (w1);
\end{tikzpicture}.
\]
The cohomological degree of a graph is
\[
	(n-1)(\#\text{edges})-n (\#\text{internal vertices}).
\]
An $n$-orientation on an admissible graph is the following:
\begin{itemize}[leftmargin=0.03\textwidth]
	\item For even~$n$ it is an ordering of the set of edges up to even permutations.
	
	\item For odd~$n$ it is an ordering of the set of half-edges and internal vertices up to even permutations.
\end{itemize}
An admissible graph with orientation data is called \emph{oriented graph}. Note that we mostly omit the orientation data in pictures, leaving the sign undefined.

The space $\Graphs_n(r)$ is defined to be the space of $\mathbb Q$-linear combinations of isomorphism classes of \mbox{($n$-)oriented} admissible graphs with~$r$ external vertices modulo the identification of an oriented graph with minus the same graph with the opposite orientation.

Each space $\Graphs_n(r)$ is a diffential graded commutative algebra. The product is obtained by gluing graphs along the external vertices:
\begin{equation}\label{equ:Graphs product pic}
	\left(
	\begin{tikzpicture}
		\node[ext] (v1) at (0,0) {$\scriptstyle 1$};
		\node[ext] (v2) at (.7,0) {$\scriptstyle 2$};
		\node[ext] (v3) at (1.4,0) {$\scriptstyle 3$};
		\node[int] (i1) at (.35,.7) {};
		\node[int] (i2) at (1.05,.7) {};
		\draw (v1) edge (i1) 
		(i1) edge (i2) edge (v2) 
		(i2) edge (v2) edge (v3);
	\end{tikzpicture}
	\right)
	\wedge 
	\left(
	\begin{tikzpicture}
		\node[ext] (v1) at (0,0) {$\scriptstyle 1$};
		\node[ext] (v2) at (.7,0) {$\scriptstyle 2$};
		\node[ext] (v3) at (1.4,0) {$\scriptstyle 3$};
		\node[int, white] (i1) at (.35,.7) {};
		\draw 
		(v2) edge[bend left] (v3);
	\end{tikzpicture}
	\right)
	=
	\begin{tikzpicture}
		\node[ext] (v1) at (0,0) {$\scriptstyle 1$};
		\node[ext] (v2) at (.7,0) {$\scriptstyle 2$};
		\node[ext] (v3) at (1.4,0) {$\scriptstyle 3$};
		\node[int] (i1) at (.35,.7) {};
		\node[int] (i2) at (1.05,.7) {};
		\draw (v1) edge (i1) 
		(i1) edge (i2) edge (v2) 
		(i2) edge (v2) edge (v3)
		(v2) edge[bend left] (v3);
	\end{tikzpicture}\, .
\end{equation}
To fix the signs in such pictures one has to specify the orientation data on the right-hand side. We do this by juxtaposing the natural order of edges or vertices on the left-hand side.

The diffirential is given by contracting an edge between two distinct vertices at least one of which is internal:
\begin{align*}
	\mathrm d
	\begin{tikzpicture}[baseline=-.65ex]
		\node[ext] (v) at (0,0) {$i$};
		\node[int](w) at (0,.5) {};
		\draw (v) edge +(-.5,.5) edge +(.5,.5) edge (w) (w) edge +(-.2,.5) edge +(.2,.5);
	\end{tikzpicture}
	&=
	\begin{tikzpicture}[baseline=-.65ex]
		\node[ext] (v) {$i$};
		\draw (v) edge +(-.5,.5) edge +(-.2,.5) edge +(.2,.5) edge +(.5,.5);
	\end{tikzpicture}
	&
	\mathrm d
	\begin{tikzpicture}[baseline=-.65ex]
		\node[int] (v) at (0,0) {};
		\node[int](w) at (0,.3) {};
		\draw (v) edge +(-.5,.5) edge +(.5,.5) edge (w) (w) edge +(-.2,.5) edge +(.2,.5);
	\end{tikzpicture}
	&=
	\begin{tikzpicture}[baseline=-.65ex]
		\node[int] (v) {};
		\draw (v) edge +(-.5,.5) edge +(-.2,.5) edge +(.2,.5) edge +(.5,.5);
	\end{tikzpicture}.
\end{align*}
Note that each dg commutative algebra $\Graphs_n(r)$  is quasi-free, generated by the internally connected graphs $\IG_n(r) \subseteq \Graphs_n(r)$, i.e. graphs that remain connected after we remove the external vertices.

Furthermore, the collection of spaces $\Graphs_n(r)$ assembles into a dg Hopf $\Lambda$-cooperad. To define the cooperadic cocomposition, it is sufficient to specify the reduced cocompositions
\[
	\Delta_s\colon \Graphs_n(r) \to \Graphs_n(r-s+1)\otimes \Graphs_n(s)
\]
corresponding to the subset $\{1, \ldots, s\} \subseteq \{1, \ldots, r\}$.
For a graph $\Gamma \in \Graphs_n(r)$
\[
	\Delta_s(\Gamma) \coloneqq \sum_{\substack{\gamma \subseteq \Gamma \\1, \ldots, s \in \gamma}} \pm (\Gamma/\gamma)\otimes \gamma,
\]
with the sum over all subgraphs $\gamma \subseteq \Gamma$ that contain the external vertices $1, \ldots, s$ and no other external vertices, and with $\Gamma/\gamma$  the graph with $\gamma$ contracted to a new external vertex numbered~$1$ and the natural ordering of the remaining vertices. The sign is the sign of the unshuffle permutation moving the edges/vertices of $\gamma$ to the right relative to the order of edges/vertices in~$\Gamma$. The $\Lambda$-operations $\Graphs_n(r) \to \Graphs_n(r+1)$ are defined by adding a zero-valent external vertex to the graph.
Finally, the right $S_r$-action is defined by permutations of the external vertices.

\begin{theorem}[Kontsevich, Lambrechts-Voli\'c]
For every $n \geq 2$ there is a natural map
\[
	\Graphs_n \to H^{\sbullet}(\mathcal F_n),
\]
which is a quasi-isomorphism. 
\end{theorem}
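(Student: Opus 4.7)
The plan is to follow the standard Kontsevich--Lambrechts--Voli\'c strategy, adapted to the conventions fixed above. First, I would construct the map $\Graphs_n \to H^{\sbullet}(\mathcal F_n)$ explicitly: recall that $H^{\sbullet}(\mathcal F_n)$ is the Arnold (Poisson) cooperad, generated as a graded commutative algebra by the classes $\omega_{ij}$ of degree $n-1$ dual to the edges between external vertices $i,j$, modulo the Arnold relation $\omega_{ij}\omega_{jk} + \omega_{jk}\omega_{ki} + \omega_{ki}\omega_{ij} = 0$. On an admissible graph $\Gamma \in \Graphs_n(r)$ with no internal vertices, send $\Gamma$ to $\prod_{\{i,j\}\in E(\Gamma)} \omega_{ij}$. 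On any graph with at least one internal vertex, send $\Gamma$ to zero. Equivalently, this is the composition $\Graphs_n \to \Omega^{\sbullet}_{PA}(\mathcal F_n) \to H^{\sbullet}(\mathcal F_n)$, where the first arrow is Kontsevich's configuration space integral obtained by integrating pullbacks of the $S^{n-1}$ volume form along the fibers of the forgetful maps $\mathcal F_n(r+s) \to \mathcal F_n(r)$.

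Next, I would verify that the map is a well-defined morphism of dg Hopf $\Lambda$-cooperads. Compatibility with the commutative product is immediate from the definition of multiplication in $\Graphs_n$ by gluing along external vertices, and the $\Lambda$-structure is preserved because adjoining a zero-valent external vertex corresponds to the Künneth factor $1$ in $H^{\sbullet}(\mathcal F_n)$. For the cocomposition $\Delta_s$, one matches the sum over subgraphs containing the prescribed external vertices with the coproduct induced by collapsing configurations. The only delicate point is the chain map condition: the differential of the graph with a single trivalent internal vertex attached to three external vertices $i,j,k$ evaluates, up to signs, to $\omega_{ij}\omega_{ik} + \omega_{ij}\omega_{jk} + \omega_{ik}\omega_{jk}$, which vanishes in $H^{\sbullet}(\mathcal F_n)$ by the Arnold relation. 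All higher-valence internal vertices produce exact classes for degree reasons.

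To show that the map is a quasi-isomorphism, I would use the presentation of $\Graphs_n(r)$ as the quasi-free graded commutative algebra on the internally connected subcomplex $\IG_n(r)$, so that the statement reduces to computing $H^{\sbullet}(\IG_n(r))$ and showing it is concentrated in the single-edge classes between pairs of distinct external vertices. For this I would filter $\IG_n(r)$ by loop order (equivalently by $\#\text{edges} - \#\text{internal vertices}$) and analyze the resulting spectral sequence: the $E_1$-page computes the cohomology of the subcomplex generated by graphs with one internal vertex of arbitrary valence, and an explicit contracting homotopy (given by splitting off an edge at a distinguished internal vertex) kills everything except the single-edge generators. Propagating this through the quasi-free structure recovers precisely the Arnold presentation of $H^{\sbullet}(\mathcal F_n(r))$.

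The main obstacle is the last step, namely proving that the internally connected complex $\IG_n(r)$ has cohomology spanned by single-edge graphs. Making the contracting homotopy precise requires careful bookkeeping of signs and of boundary phenomena caused by tadpoles, multiple edges, and graphs whose internal components disconnect upon edge contraction. The sign bookkeeping and the verification that the spectral sequence degenerates at $E_2$ are the genuinely technical content, and I would follow the arguments of Lambrechts--Voli\'c verbatim at this stage rather than reprove them.
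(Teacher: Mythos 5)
The paper does not prove this statement at all: it is quoted as a background theorem of Kontsevich and Lambrechts--Voli\'c, so there is no internal argument to compare against. Your sketch reproduces the standard proof from the cited literature and is essentially correct in outline: the projection sending every graph with an internal vertex to zero and an edge between external vertices $i,j$ to $\omega_{ij}$, the verification that the chain-map condition amounts to the Arnold relation, and the reduction of the quasi-isomorphism statement to the internally connected generators $\IG_n(r)$ are precisely the ingredients of the Kontsevich/Lambrechts--Voli\'c argument. Two small imprecisions: graphs with tadpoles or multiple edges between external vertices must also be declared to map to zero (there is no class $\omega_{ii}$, and $\omega_{ij}^2=0$), which your formula only handles implicitly; and the vanishing of the image of the differential of a graph with a single internal vertex of valence at least $4$ is not a degree argument but a consequence of iterated Arnold relations. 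The genuinely technical step --- that the cohomology of the internally connected complex is spanned by the single-edge classes, including the sign and tadpole bookkeeping --- you explicitly defer to Lambrechts--Voli\'c, which is exactly what the paper itself does by citing the result, so the proposal is acceptable as a proof sketch but does not constitute an independent proof of the hard part.
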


\subsection{The $\Graphs_n$-comodule $\Graphs_{V, n}$}
Let $V$ be a finite dimensional positively graded vector space and $n$ an integer. Define $\Graphs_{V, n}(r)$ to be the space of $\mathbb Q$-linear combinations of isomorphism classes of oriented admissible graphs with $r$ external vertices, where all the vertices are decorated by $S(V)$, with each decoration in $V$ counting $+1$ to the valency.
\[
\begin{tikzpicture}
	\node[ext, label=90:{$\scriptstyle \beta\gamma$}] (v1) at (0,0) {$\scriptstyle 1$};
	\node[ext] (v2) at (.7,0) {$\scriptstyle 2$};
	\node[ext] (v3) at (1.4,0) {$\scriptstyle 3$};
	\node[int] (i1) at (.35,.7) {};
	\node[int, label=90:{$\scriptstyle \alpha$}] (i2) at (1.05,.7) {};
	\draw (v1) edge (i1) 
	(i1) edge (i2) edge (v2) 
	(i2) edge (v3)
	(v2) edge[bend left] (v3)
	(i1) to [in=75, out=105, loop] (i1);
\end{tikzpicture}
\in \Graphs_{V,n}(3), \text{ with $\alpha,\beta,\gamma\in V$.} 
\]

As before, the graded commutative algebra structure on $\Graphs_{V,n}(r)$ is given by gluing graphs along the external vertices multiplying the corresponding external vertex decorations. The differential, the $S_r$-action and the $\Lambda$-structure are defined as before.

Finally, there is a $\Graphs_n$-comodule structure on $\Graphs_{V,n}$ defined by subgraph contraction, for example;
\[
\begin{tikzpicture}
	\node[ext, label=90:{$\scriptstyle \alpha$}] (v1) at (0,0) {$\scriptstyle 1$};
	\node[ext, label=90:{$\scriptstyle \beta$}] (v2) at (.7,0) {$\scriptstyle 2$};
	\node[ext] (v3) at (1.4,0) {$\scriptstyle 3$};
	\node[int, white] (i1) at (.35,.7) {};
	\draw (v1) edge (v2)
	(v2) edge (v3);
\end{tikzpicture}
\mapsto
\begin{tikzpicture}
	\node[ext, label=90:{$\scriptstyle \alpha\beta$}] (v1) at (0,0) {$\scriptstyle 1$};
	\node[ext] (v3) at (.7,0) {$\scriptstyle 2$};
	\draw (v1) edge (v3)
	(v1) to [in=-75, out=-105, loop] (v1);
\end{tikzpicture}
\otimes 
\begin{tikzpicture}
	\node[ext] (v1) at (0,0) {$\scriptstyle 1$};
	\node[ext] (v3) at (.7,0) {$\scriptstyle 2$};
\end{tikzpicture}
+
\begin{tikzpicture}
	\node[ext, label=90:{$\scriptstyle \alpha\beta$}] (v1) at (0,0) {$\scriptstyle 1$};
	\node[ext] (v3) at (.7,0) {$\scriptstyle 2$};
	\draw (v1) edge (v3);
\end{tikzpicture}
\otimes 
\begin{tikzpicture}
	\node[ext] (v1) at (0,0) {$\scriptstyle 1$};
	\node[ext] (v3) at (.7,0) {$\scriptstyle 2$};
	\draw (v1) edge (v3);
\end{tikzpicture}
\]

\subsection{Hairy graph complexes}\label{hairy-graphs-section}
Let $U$, $V$ be a pair of finite dimensional positively graded vector spaces and $n$ an integer. Define $\hgc_{U, V, n}$ to be the space of $\Q$-linear combinations of isomorphism classes of admissible graphs with external vertices of valence~$1$, where all the vertices are decorated by $S(V)$, with each decoration in $V$ counting $+1$ to the valence, and the external vertices are decorated by $U_1^* = (\Q 1 \oplus U)^*$, where $1$ is a formal element of degree~$0$. For the comprehensive exposition we refer to \cite[Section~9.2]{will23}

\[
\begin{tikzpicture}[scale=.7,baseline=-.65ex]
	\node[int] (v1) at (-1,0){};
	\node[int,label={$v$}] (v2) at (0,1){};
	\node[int] (v3) at (1,0){};
	\node[int] (v4) at (0,-1){};
	\node[ext,label=180:{$a_1$}] (w1) at (-2,0) {};
	\node[ext,label=0:{$a_2$}] (w2) at (2,0) {};
	\node[ext,label=270:{$a_3$}] (w3) at (0,-2) {};
	\draw (v1)  edge (v2) edge (v4) edge (w1) (v2) edge (v4) (v3) edge (v2) edge (v4) (v4) edge (w3) (v3) edge (w2);
\end{tikzpicture}
\,,\quad\quad
a_1,a_2,a_3\in U_1^*, v\in V.
\]

\section{Operadic part}

\subsection{Setting up}
Let $M$ be a parallelized manifold of dimension~$m$, and let $N$ be a smooth manifold of dimension~$n$ that admits a section of the bundle $\Frm(N)$ of $m$-frames on $N$. Denote the resulting $m$-frame on $N$ by $\sigma_{std}$.

Any embedding $f\colon M \hookrightarrow N$ gives us two sections of the induced bundle $f^*\Frm(N)$ over $M$. The first section is the $m$-frame defined by $df$. The second is the composite of $f$ with the section $\sigma_{std}$.

Let $\widetilde\emb(M, N)$ (resp. $\widetilde\imm(M, N)$) be the set of pairs $(f, h)$, where $f\colon M \hookrightarrow N$ is an embedding (resp. an immersion) and $h\colon [0,1] \to \Gamma(M, f^*\Frm(N))$ is a path from $\sigma_{std}$ to $df$.

\subsection{The limit of the Taylor tower for $\widetilde\emb(M, N)$}
Here we give a description of the Taylor tower for the functor $\widetilde\emb(\phantom M, N)\colon \mathcal O(M) \to \top$.

\pasttheorem{main-limit}

\begin{proof}
Note that $\widetilde\emb(M, N)$ is a pullback of the following diagram
\[
\begin{tikzcd}
\widetilde\emb(M, N)
\arrow[d] \arrow[r]
\arrow[dr, phantom, "\ulcorner", very near start]
&
\widetilde\imm(M, N)
\arrow[d]
\\
\emb(M,N)
\arrow[r]
&
\imm(M,N).
\end{tikzcd}
\]
Since $\widetilde\imm(\phantom M, N)\colon \mathcal O(M) \to \top$ is a linear functor and the canonical map $\widetilde\emb(\phantom M, N) \to \widetilde\imm(\phantom M, N)$ is a weak equivalence once restricted to a disc,
the diagram can be written as
\[
\begin{tikzcd}
\widetilde\emb(M, N)
\arrow[d] \arrow[r]
\arrow[dr, phantom, "\ulcorner", very near start]
&
T_1\widetilde\emb(M, N)
\arrow[d]
\\
\emb(M,N)
\arrow[r]
&
T_1\emb(M, N).
\end{tikzcd}
\]
Therefore, $T_k\widetilde\emb(M,N)$ can be described as a pullback
\[
\begin{tikzcd}
T_k\widetilde\emb(M, N)
\arrow[d] \arrow[r]
\arrow[dr, phantom, "\ulcorner", very near start]
&
T_1\widetilde\emb(M, N)
\arrow[d]
\\
T_k\emb(M,N)
\arrow[r]
&
T_1\emb(M, N).
\end{tikzcd}
\]

To proceed we show that the diagram
\[
T_k\emb(M,N) \to T_1\emb(M,N) \gets T_1\widetilde\emb(M,N)
\]
is weakly equivalent to
\[
\map^h_{\mathrm{mod}_{\leqslant k}\text-\mathcal F_m} \big(\mathcal F_M, \mathcal F^{m\text{-fr}}_N\big)
\to
\map^h_{\mathrm{mod}_{\leqslant 1}\text-\mathcal F_m} \big(\mathcal F_M, \mathcal F^{m\text{-fr}}_N\big)
\gets
\map^h_{\mathrm{mod}_{\leqslant 1}\text-\mathcal F_m} \big(\mathcal F_M, \widetilde{\mathcal F}_N\big).
\]
The weak equivalence between first two terms is a part of the \cite{go-we99, weis99} convergence result. The latter as noted above is weakly equivalent to $\widetilde\imm(M,N)$, which, in turn, is weakly equivalent to
\[
	\map_{\mathrm{mod}_{\leqslant 1}\text-L\mathbb D^{\text{fr}}_m}^{h} \big(\emb(\phantom M, M), \widetilde\emb(\phantom M, N)\big),
\]
where the right $L\mathbb D_m^{\mathrm{fr}}$-module structure on $\widetilde\emb(\phantom M, N)$ is given, as usual, by restriction to disjoint copies of $\mathbb D^m$.
Finally, the usual compactification argument implies a weak equivalence with
\[
\map_{\mathrm{mod}_{\leqslant 1}\text-\mathcal F_m}^{h} \big(\mathcal F_M, \widetilde{\mathcal F}_N\big).
\]

To conclude the proof, we need to show that a pullback
\[
\begin{tikzcd}
\arrow[dr, phantom, "\ulcorner", very near start]
&
\map^h_{\mathrm{mod}_{\leqslant 1}\text-\mathcal F_m} \big(\mathcal F_M, \widetilde{\mathcal F}_N\big)
\arrow[d]
\\
\map^h_{\mathrm{mod}_{\leqslant k}\text-\mathcal F_m} \big(\mathcal F_M, \mathcal F^{m\text{-fr}}_N\big)
\arrow[r]
&
\map^h_{\mathrm{mod}_{\leqslant 1}\text-\mathcal F_m} \big(\mathcal F_M, \mathcal F^{m\text{-fr}}_N\big)
\end{tikzcd}
\]
is given by $\map_{\mathrm{mod}_{\leqslant k}\text-\mathcal F_m}^h \big(\mathcal F_M, \widetilde{\mathcal F}_N\big).$

Let $\mathcal F^h_M \to \mathcal F_M$ be the "hairy" cofibrant replacement (see~\cite[p.~1252]{turc13}). Since with the projective model structure (\hr[Section]{model-modules}) every module is fibrant, the "derived" diagram above can be written with (non-derived) mapping spaces as
\[
\begin{tikzcd}
\arrow[dr, phantom, "\ulcorner", very near start]
&
\map_{\mathrm{mod}_{\leqslant 1}\text-\mathcal F_m} \big(\mathcal F^h_M, \widetilde{\mathcal F}_N\big)
\arrow[d]
\\
\map_{\mathrm{mod}_{\leqslant k}\text-\mathcal F_m} \big(\mathcal F^h_M, \mathcal F^{m\text{-fr}}_N\big)
\arrow[r]
&
\map_{\mathrm{mod}_{\leqslant 1}\text-\mathcal F_m} \big(\mathcal F^h_M, \mathcal F^{m\text{-fr}}_N\big).
\end{tikzcd}
\]
Finally, the pullback above is isomorphic to
\[
\map_{\mathrm{mod}_{\leqslant k}\text-\mathcal F_m} \big(\mathcal F^h_M, \widetilde{\mathcal F}_N\big) \simeq \map_{\mathrm{mod}_{\leqslant k}\text-\mathcal F_m}^h \big(\mathcal F_M, \widetilde{\mathcal F}_N\big).
\]
Indeed, the underlying morphism $\mathcal F^h_M \to \mathcal F^{m\text{-fr}}_N$ of $\mathcal F_m$-modules $\mathcal F^h_M$ and $\widetilde{\mathcal F}_N$ is uniquely defined by the projection onto
$\map_{\mathrm{mod}_{\leqslant k}\text-\mathcal F_m} \big(\mathcal F^h_M,
\mathcal F^{m\text{-fr}}_N\big)$
and the path factor at each point is uniquely defined by the projection onto
$\map_{\mathrm{mod}_{\leqslant 1}\text-\mathcal F_m} \big(\mathcal F^h_M,
\widetilde{\mathcal F}_N\big)$.
\end{proof}

\subsection{Convergence. Proof of \hr[Theorem]{main-convergence}}
In this section, we prove our main convergence result.

\pasttheorem{main-convergence}

\textit{Idea of the proof}.
We construct the diagram
\begin{equation}\label{theorem32-diagram}
\begin{tikzcd}
	F
	\arrow[d]
	&&&
	F'
	\arrow[d]
	\\
	\widetilde\emb(M, N)
	\arrow[d, two heads, "p"]
	\arrow[r]
	&
	T_\infty \widetilde\emb(M, N)
	\arrow[d]
	\arrow[r, "\sim"]
	&
	\map^h_{\mathrm{mod}\text-\mathcal F_m}(\mathcal F_M, \widetilde{\mathcal F}_N)
	\arrow[d, two heads]
	\arrow[r, "\sim"]
	&
	\map_{\mathrm{mod}\text-\mathcal F_m}(\mathcal F^h_M, \widetilde{\mathcal F}_N)
	\arrow[d, two heads]
	\\
	\emb(M, N)
	\arrow[r, "\sim"]
	&
	T_\infty\emb(M, N)
	\arrow[r, "\sim"]
	&
	\map^h_{\mathrm{mod}\text-\mathcal F_m}(\mathcal F_M, \mathcal F^{m\text{-fr}}_N)
	\arrow[r, "\sim"]
	&
	\map_{\mathrm{mod}\text-\mathcal F_m}(\mathcal F^h_M, \mathcal F^{m\text{-fr}}_N),
\end{tikzcd}
\end{equation}
where two-headed arrows are fibrations and the induced map between the fibres $F$ and $F'$ over the same connected component is a weak equivalence. The latter implies that the limit map is a weak equivalence.

\vspace{0.5em}
We start with the left column.
In the following lemma we prove that the left bottom arrow~$p$ is a fibration
and describe the fibre.

\begin{lemma} The canonical map $\widetilde\emb(M,N) \to \emb(M, N)$ induced by the projection onto the first factor is a fibration.
The fibre over a given embedding $f \in \emb(M, N)$ is homotopy equivalent to the space of sections $\Gamma\Big(M, \Path^{fib}_{\sigma_{std}, df}\big(f^*\Frm(N)\big)\Big)$,
where $\Path^{fib}_{\sigma_{std}, df}\big(f^*\Frm(N)\big)$ is the fibre-wise space of paths of $f^*\Frm(N)$ with the paths starting at $\sigma_{std}$ and terminating at $df$. The same holds for $\imm$.
\end{lemma}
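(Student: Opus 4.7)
The plan is to identify the fiber by the exponential law and then realize the projection $\widetilde\emb(M,N) \to \emb(M,N)$ as a pullback of an endpoint-evaluation fibration on a fiberwise path space. Both statements (for $\emb$ and $\imm$) will follow from the same construction, with the type of $f$ playing no role beyond providing the two distinguished sections $\sigma_{std}$ and $df$.

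For the fiber identification I would fix $f \in \emb(M,N)$ and unwind definitions. The preimage of $f$ consists of paths $h\colon [0,1] \to \Gamma(M, f^*\Frm(N))$ with $h(0) = \sigma_{std}$ and $h(1) = df$. By the exponential adjunction in compactly generated spaces, such a path is the same datum as a fiber-preserving map $\tilde h\colon M \times [0,1] \to f^*\Frm(N)$ over $M$ with $\tilde h(-,0) = \sigma_{std}$ and $\tilde h(-,1) = df$, and this is by definition a section of the fiberwise path space $\Path^{fib}_{\sigma_{std}, df}(f^*\Frm(N)) \to M$. The adjunction is in fact a homeomorphism, which is stronger than the stated homotopy equivalence.

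For the fibration property I plan to assemble the individual bundles $f^*\Frm(N)$ into a single global family. Pulling $\Frm(N) \to N$ back along the evaluation map $\mathrm{ev}\colon \emb(M,N) \times M \to N$, $(f,x) \mapsto f(x)$, yields a fiber bundle $\mathcal E \to \emb(M,N) \times M$. The associated fiberwise section space over $\emb(M,N)$,
\[
\mathcal S \coloneqq \big\{(f,s) \,\big|\, f \in \emb(M,N),\ s \in \Gamma(M, f^*\Frm(N))\big\} \xrightarrow{\pi} \emb(M,N),
\]
is a fibration by a standard section-space argument, using local triviality of $\Frm(N) \to N$ and paracompactness. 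The two global sections $\sigma\colon f \mapsto (f, \sigma_{std} \circ f)$ and $d\colon f \mapsto (f, df)$ of $\pi$ combine into a map $(\sigma, d)\colon \emb(M,N) \to \mathcal S \times_{\emb(M,N)} \mathcal S$, and $\widetilde\emb(M,N)$ is identified with the pullback
\[
\widetilde\emb(M,N) \cong \Path^{fib}_{\emb(M,N)}(\mathcal S) \times_{\mathcal S \times_{\emb(M,N)} \mathcal S} \emb(M,N)
\]
taken along the endpoint map $(\mathrm{ev}_0, \mathrm{ev}_1)$ and $(\sigma, d)$. Since endpoint evaluation on a fiberwise path space of a fibration is itself a fibration, the map $\widetilde\emb(M,N) \to \emb(M,N)$ is a fibration by base change. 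The immersion case is identical upon replacing $\emb$ by $\imm$, the only inputs being continuity of $\mathrm{ev}$ and of the differential.

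The main obstacle I foresee is verifying that $\pi\colon \mathcal S \to \emb(M,N)$ is a fibration in sufficient generality: this is a local-to-global statement for section spaces of a bundle varying continuously in a parameter, which is standard in the smooth setting but must be invoked with a bit of care to match whatever category of spaces (compactly generated, $\Delta$-generated, etc.) is used in the rest of the paper.
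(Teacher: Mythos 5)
Your argument is correct, and the fibre identification (exponential law over the locally compact $M$, giving a homeomorphism between $\Path_{\sigma_{std},df}\big(\Gamma(M,f^*\Frm(N))\big)$ and $\Gamma\big(M,\Path^{fib}_{\sigma_{std},df}(f^*\Frm(N))\big)$) is exactly what the paper does. For the fibration property, however, you take a genuinely different route: the paper verifies the homotopy lifting property by hand, using the exponential law to rewrite the lifting problem as an extension problem for maps $X\times I^2\to\Gamma(M,f^*\Frm(N))$ prescribed on $X\times(I\times\partial I\cup\{0\}\times I)$ and then composing with a retraction of the square onto those three edges; you instead package everything globally, realizing $\widetilde\emb(M,N)\to\emb(M,N)$ as a base change of the endpoint-evaluation map on the fibrewise path space of the section-space fibration $\mathcal S\to\emb(M,N)$, and then invoke stability of fibrations under mapping spaces, fibrewise path spaces and pullback. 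Your approach buys robustness: it keeps track automatically of the constraint that, as the embedding moves through the homotopy, the interpolating frames must lift the \emph{current} embedding $f_{(x,t)}$ (a point that is implicit in the paper's retraction argument, where the boundary data being retracted lifts nearby embeddings $f_{(x,t')}$ and so needs a word of care), at the cost of having to justify that $\mathcal S\to\emb(M,N)$ is a fibration. For that step your appeal to ``local triviality and paracompactness'' can be made cleaner and category-independent by observing that $\mathcal S$ is the pullback of $\map(M,\Frm(N))\to\map(M,N)$ (post-composition with the bundle projection, a fibration since $\Frm(N)\to N$ is one and $M$ is locally compact Hausdorff) along the inclusion $\emb(M,N)\subseteq\map(M,N)$; similarly, the fact that endpoint evaluation $\Path^{fib}_{\emb(M,N)}(\mathcal S)\to\mathcal S\times_{\emb(M,N)}\mathcal S$ is a fibration is the standard pullback-hom statement for the cofibration $\partial I\subseteq I$ against a Hurewicz fibration and deserves an explicit citation or a one-line proof (which, amusingly, is the same three-edges retraction the paper uses, now applied at the level where the base-point constraint is correctly maintained). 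The immersion case goes through verbatim in both treatments.
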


\begin{proof}
Let $X$ be a topological space, and suppose we are given a (solid) diagram

\begin{center}
\begin{tikzcd}
    X\times \{0\}
    \arrow[d, hook]
    \arrow[r]
    &
    \widetilde\emb(M, N)
    \arrow[d]
    \\
    X\times I
    \arrow[ur, dashed, "\widetilde f"]
    \arrow[r, "f"]
    &
    \emb(M, N).
\end{tikzcd}
\end{center}
We need to construct a dashed arrow. The first factor of $\widetilde f$ is uniquely defined by $f$. To define the second factor one needs to construct a map from $X\times I$ to
$\Path_{\sigma_{std}, df}\big(\Gamma(M, f^*\Frm(N))\big)$. By the exponential law, it is the same as a map from $X\times I^2 \to \Gamma(M, f^*\Frm(N))$ that is defined on the product $X \times (I\times \partial I \cup \{0\}\times I)$ of $X$ with three edges. Let $r \colon I^2 \to (I\times \partial I \cup \{0\}\times I)$ be a retraction defined by the stereographic projection of the square $I^2$ onto the union of three edges from the point $(2, \frac 12)$.
Then a required extension $X\times I^2 \to \Gamma(M, f^*\Frm(N))$ can be obtained by the composite
\[
    X\times I^2 \xrightarrow{\id\times r} X \times (I\times \partial I \cup \{0\}\times I) \to \Gamma(M, f^*\Frm(N)).
\]

To describe the fibre now we need to find the preimage over a point. A point in the preimage is a path in the space $\Gamma(M, f^*\Frm(N))$ of sections starting at $\sigma_{std}$ and terminating at $df$. Thus, the preimage is $\Path_{\sigma_{std}, df}\big(\Gamma(M, f^*\Frm(N))\big)$. The latter space coincides with $\Gamma\Big(M, \Path^{fib}_{\sigma_{std}, df}\big(f^*\Frm(N)\big)\Big)$ from the assertion.
\end{proof}

\vspace{0.5em}
Note that the remaining vertical arrows in~\eqref{theorem32-diagram} are fibrations since the target map $\widetilde{\mathcal F}_N \to \mathcal F^{m\text{-fr}}_N$ is.
Now we pass to the middle horizontal arrows.
The bottom arrow is a weak equivalence due to~\cite[Theorem~2.1]{turc13}.
By \hr[Theorem]{main-limit}, we already know the equivalence
\[
	T_\infty\widetilde{\emb} (M, N) \simeq
	\map^h_{\mathrm{mod}\text-L\mathbb D_m} \big(\semb(\phantom U, M), \widetilde{\emb}(\phantom U, N)\big).
\]
Therefore, we only need to show the equivalence
\[
	\map^h_{\mathrm{mod}\text-L\mathbb D_m} \big(\emb(\phantom U, M), \widetilde{\emb}(\phantom U, N)\big) \simeq \map^h_{\mathrm{mod}\text-\mathcal F_m}(\mathcal F_M, \widetilde{\mathcal F}_N).
\]

By~\hr[Theorem]{model-ind-res}, we need prove that there is a weak equivalence between $\widetilde\emb(\phantom U, N)$ (resp. $\emb(\phantom U, M)$) and $\widetilde{\mathcal F}_N$ (resp. $\mathcal F_M$) that carries the modules structure
from $L\mathbb D_m$ to $\mathcal F_m$.

First we note that by applying the strategy from the proof of Salvatore's zigzag of weak equivalences
(\hr[Proposition]{salv-w-eq}) we obtain a zigzag of right $W(L\mathbb D_m)$-modules
\[
	\emb(\phantom U, M)
	\xleftarrow{\sim}
	W\big(\emb(\phantom U, M)\big)
	\xrightarrow{\sim}
	\mathcal F_M.
\]

\begin{proposition}
There is a zigzag of homotopy equivalences
\begin{center}
\begin{tikzcd}
    \widetilde\emb(\phantom U, N)
    &
    W\big(\widetilde\emb(\phantom U, N)\big)
    \arrow[l, swap, "\sim"]
    \arrow[r, "\sim"]
    &
    \widetilde{\mathcal{F}}_N
    &
    \mathcal{F}_N
    \arrow[l, swap, "\sim"]
    \\
    L\mathbb D_m
    \arrow[u, symbol=\curvearrowright]
    &
    W(L\mathbb D_m)
    \arrow[u, symbol=\curvearrowright]
    \arrow[l, swap, "\sim"]
    \arrow[r, "\sim"]
    &
    \mathcal{F}_m
    \arrow[u, symbol=\curvearrowright]
    \arrow[r, equal]
    &
    \mathcal{F}_m.
    \arrow[u, symbol=\curvearrowright]
\end{tikzcd}
\end{center}
The bottom row indicates the underlying operad for a module. And the homotopy equivalences respect the restricted module structure.
\end{proposition}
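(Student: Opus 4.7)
\emph{Strategy.} We plan to adapt Salvatore's zigzag (\hr[Proposition]{salv-w-eq}) together with its module analogue $\emb(\phantom U, M) \xleftarrow{\sim} W\big(\emb(\phantom U, M)\big) \xrightarrow{\sim} \mathcal F_M$ recalled just above, carrying the additional path-of-frames datum through the whole construction. Three maps are needed: the $W$-construction counit $W\big(\widetilde\emb(\phantom U, N)\big) \to \widetilde\emb(\phantom U, N)$, a Salvatore-style shrinking map $W\big(\widetilde\emb(\phantom U, N)\big) \to \widetilde{\mathcal F}_N$, and the section $\mathcal F_N \to \widetilde{\mathcal F}_N$ induced by $\sigma_{std}$. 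Equivariance under the respective bottom-row operads will be built into the constructions.

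\emph{Outer arrows.} The leftmost arrow is the canonical counit of the $W$-construction for right modules: contract all edges and multicompose the decorations. It is a map of right $W(L\mathbb D_m)$-modules and an arity-wise homotopy equivalence by the module version of Boardman--Vogt recalled above. The rightmost arrow sends $(p_1,\ldots,p_k) \in \mathcal F_N(k)$ to the framed configuration with frames $\sigma_{std}(p_i)$ and with each $h_i$ the constant path at $\sigma_{std}(p_i)$. This is a section of the projection $\widetilde{\mathcal F}_N \to \mathcal F_N$, whose fibres are products of spaces of ``free framing together with a path joining it to $\sigma_{std}$'' and are therefore contractible; hence both the projection and its section are weak equivalences. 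The right $\mathcal F_m$-action on $\mathcal F_N$ is defined through $\sigma_{std}$ and matches the action on $\widetilde{\mathcal F}_N$ after including constant paths, so equivariance is automatic.

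\emph{Middle arrow.} This is the main point. Given a decorated tree in $W\big(\widetilde\emb(\phantom U, N)\big)(k)$---root decorated by $(f,h)\in\widetilde\emb(U,N)$ for some disjoint union of discs $U$, internal vertices by $W(L\mathbb D_m)$-decorations, edges by lengths in $[0,1]$---we apply Salvatore's scaling procedure from~\cite{salv01}: edges of length $t$ scale the attached subtree by $t$, and as $t\to 0$ subconfigurations become infinitesimal, producing a limit configuration $(p_1,\ldots,p_k)$ on $N$ together with $m$-frames induced by $df$, i.e.\ a point of $\mathcal F^{m\text{-fr}}_N(k)$. Restricting the path $h\colon [0,1]\to \Gamma(U, f^*\Frm(N))$ to each of the $k$ preimage points in $U$ gives $k$ paths of $m$-frames over the $p_i$, each connecting $\sigma_{std}$ to the $df$-frame; these provide the path datum promoting the framed configuration to an element of $\widetilde{\mathcal F}_N(k)$. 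Well-definedness on the three $W$-construction relations, continuity on the boundary strata, and $W(L\mathbb D_m)$-equivariance follow verbatim from Salvatore's proof, since the path datum is carried along by restriction, an operation that commutes with grafting. That the resulting map is a weak equivalence follows from the commutative square
\[
\begin{tikzcd}
W\big(\widetilde\emb(\phantom U, N)\big) \arrow[r] \arrow[d] & \widetilde{\mathcal F}_N \arrow[d] \\
W\big(\emb(\phantom U, N)\big) \arrow[r, "\sim"] & \mathcal F^{m\text{-fr}}_N,
\end{tikzcd}
\]
whose bottom arrow is the $(N,\sigma_{std})$-analogue of the classical Salvatore map, and whose vertical arrows have the same contractible ``framing-plus-path'' fibres. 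The main technical hurdle is precisely this continuity/boundary check: under degenerating subconfigurations, the restricted-path datum must converge compatibly to a path-of-frames at the limit configuration. This reduces to continuity of the restriction $\Gamma(U, f^*\Frm(N)) \to \Frm(N)_{f(u)}$ in $u$, together with the scaling analysis from the classical unframed case.
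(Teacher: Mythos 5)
Your overall route is the same as the paper's: the same three maps (the $W$-construction counit, a Salvatore-type rescaling map that carries the path-of-frames datum by restricting $h$ to the disc centres, and the constant-path section $\mathcal F_N\to\widetilde{\mathcal F}_N$, with the contractibility of the ``free frame plus path'' fibres justifying the last arrow). Two concrete points in your write-up need fixing, though. First, your scaling convention is reversed relative to the $W$-construction used here: edges of length $0$ are contracted and their labels composed, while grafting (the module/operad action) produces edges of length $1$; accordingly the label attached above an edge of length $t$ must be rescaled by $1-t$ (the paper uses $f\circ m_{1-t}$), so that subconfigurations become infinitesimal as $t\to 1$. With your stated rule (scale by $t$, infinitesimal as $t\to 0$) the map would not descend to the quotient defining $W\big(\widetilde\emb(\phantom U,N)\big)$ and would not be compatible with the $W(L\mathbb D_m)$-module structure.

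Second, and more substantively, the justification of the weak-equivalence claim for the middle arrow is wrong as stated: the vertical maps in your comparison square do not have contractible fibres. The fibre of $\widetilde{\mathcal F}_N\to\mathcal F^{m\text{-fr}}_N$ over a framed configuration is a product of path spaces with \emph{both} endpoints fixed ($\sigma_{std}$ and the given frame), and the fibre of $W\big(\widetilde\emb(\phantom U,N)\big)\to W\big(\emb(\phantom U,N)\big)$ over a tree with root label $f$ is $\Path_{\sigma_{std},df}\big(\Gamma(U,f^*\Frm(N))\big)$; such fixed-endpoint path spaces are (when nonempty) homotopy equivalent to loop spaces and are not contractible in general. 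Contractibility only holds for the fibres over $\mathcal F_N$, where the frame endpoint is free --- which is exactly what you correctly invoke for the rightmost arrow, but it does not apply here. The square argument is salvageable, since the induced maps between these corresponding fibres are equivalences and one can compare the two fibrations (e.g.\ via their long exact sequences); alternatively, argue as the paper does: the restriction-to-centres map $r\colon\widetilde\emb(\phantom U,N)\to\widetilde{\mathcal F}_N$ is already an arity-wise homotopy equivalence because discs are contractible, and the rescaling map $R$ on $W\big(\widetilde\emb(\phantom U,N)\big)$ is an extension of $r$ compatible with the counit, whence the middle arrow is an equivalence.
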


\begin{corollary}
There is a weak equivalence of mapping spaces
\[
\map_{\mathrm{mod}_{\leqslant k}\text-\mathcal F_m}^h \big(\mathcal F_M, \widetilde{\mathcal F}_N\big)
\simeq
\map_{\mathrm{mod}_{\leqslant k}\text-\mathcal F_m}^h \big(\mathcal F_M, \mathcal F_N\big).
\]
\end{corollary}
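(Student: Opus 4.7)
The plan is to derive this corollary as an almost immediate consequence of the preceding proposition. The rightmost arrow of the zigzag stated there produces a weak equivalence $\mathcal F_N \xrightarrow{\sim} \widetilde{\mathcal F}_N$ of right $\mathcal F_m$-modules, where $\mathcal F_N$ carries its $\mathcal F_m$-action through the fixed frame $\sigma_{std}$ and $\widetilde{\mathcal F}_N$ carries the action recalled in \hr[Definition]{emb-tilde} (acting on the first factor and duplicating the path factor). Once this weak equivalence is in hand, the corollary drops out by applying the derived mapping functor $\map^h_{\mathrm{mod}_{\leqslant k}\text-\mathcal F_m}(\mathcal F_M, -)$ and invoking its homotopy invariance.

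To make this concrete, I would first spell out the comparison map $\mathcal F_N \to \widetilde{\mathcal F}_N$. It sends a configuration $(x_1, \ldots, x_r)$ to the framed tuple $\big((x_1, \sigma_{std}), \ldots, (x_r, \sigma_{std})\big)$ equipped with the constant paths $h_i \equiv \sigma_{std}$. Equivariance under the $\mathcal F_m$-action is automatic, because on the image of this section both module structures are governed by $\sigma_{std}$. The canonical projection $\widetilde{\mathcal F}_N \to \mathcal F_N$ is a one-sided inverse, and its fibres --- spaces of pairs consisting of a frame at $x_i$ together with a path from that frame to $\sigma_{std}$ --- are contractible by sliding along the path to its fixed endpoint. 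So the section is arity-wise a homotopy equivalence of right $\mathcal F_m$-modules, which is precisely what the proposition asserts.

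The concluding step is then formal: in the projective model structure on $\Mod_{\mathcal F_m}$ described in \hr[Section]{model-modules}, every module is fibrant, and the same is true of its $k$-truncated version. Consequently, for a cofibrant replacement of $\mathcal F_M$ (such as $\mathcal F^h_M$), the functor $\map^h_{\mathrm{mod}_{\leqslant k}\text-\mathcal F_m}(\mathcal F_M, -)$ preserves arity-wise weak equivalences in its second argument by Ken Brown's lemma. Applied to $\mathcal F_N \xrightarrow{\sim} \widetilde{\mathcal F}_N$ this yields the asserted equivalence. The only point that requires real content, namely that the fibrewise path contraction can be arranged compatibly with the full $\mathcal F_m$-action on both modules, is precisely what the preceding proposition supplies; no further model-categorical input is needed.
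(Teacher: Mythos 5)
Your argument is correct and matches the paper's: the paper also deduces the corollary from the rightmost arrow of the preceding zigzag, namely the section $\mathcal F_N \to \widetilde{\mathcal F}_N$ given by equipping a configuration with the stationary paths at $\sigma_{std}$, which is an arity-wise homotopy equivalence of right $\mathcal F_m$-modules because the path fibres are contractible. The concluding step via fibrancy of all modules in the projective structure and homotopy invariance of $\map^h_{\mathrm{mod}_{\leqslant k}\text-\mathcal F_m}(\mathcal F_M,-)$ is the same formal mechanism the paper relies on.
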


\begin{proof}
To prove the equivalences we essentially just mimic Salvatore's argument.
The first arrow is given by sending a labeled tree to the corresponding composite of the labels in $\widetilde\emb(\phantom U, N)$. It is clearly a morphism of right $W(L \mathbb D_m)$-modules. And a homotopy equivalence is given by contracting edges.

To construct the second map note first that there exists an obvious morphism of symmetric sequences $r\colon\widetilde\emb(\phantom U, N) \to \widetilde{\mathcal F}_N$. It is defined by sending $\big((f_1, h_1), \ldots, (f_k, h_k)\big) \in \widetilde\emb(\phantom U, N)(k)$ to the restrictions to~$0$, i.e.
\[
    r_k\big((f_1, h_1), \ldots, (f_k, h_k)\big)
    {}={}
    \big((f_1(0), h_1|_{\{0\}\times I}), \ldots, (f_k(0), h_k|_{\{0\}\times I})\big) \in \widetilde{\mathcal F}_N(k).
\]
Since discs are contractible,  $r$ is an arity-wise homotopy equivalence. Thus, it is enough to extend $r$
to a morphism $R\colon W\big(\widetilde\emb(\phantom U, N)\big) \to \widetilde{\mathcal F}_N$ that is compatible with the module structures. Let $\tau$ be a representative of an element of $W\big(\widetilde\emb(\phantom U, N)\big)(k)$, i.e. $\tau$ is a (not necessarily two-levelled) tree with the root labeled by $\widetilde\emb(\phantom U, N)$ and other internal vertices labeled by $L\mathbb D_m$ with edges having length in $[0, 1]$. Suppose in addition that the lengths are in $(0,1)$. Let $m_t\colon D^m \to D^m$ be the dilation by $t$. From $\tau$ we construct a tree $\tau'$ with vertices labeled by $\widetilde\emb(\phantom U, N)$ and $L\mathbb D_m$. Combinatorially $\tau'$ is the same tree. For a vertex $v\in \tau$ decorated by embeddings $(f_1, \ldots, f_{|v|})$ and incoming edges of lengths $t_1, \ldots, t_k$ respectively, the corresponding vertex of $\tau'$ is decorated by rescaled embeddings
$(f_1\circ m_{1-t_1}, \ldots, f_{|v|}\circ m_{1-t_{|v|}})$. The path factor of the root label remains untouched. Now, define $F \in \widetilde\emb(\phantom U, N)(k)$ to be the composite of the labels of $\tau'$. And finally, $R_k(\tau) \coloneqq r_k(F)\in \widetilde{\mathcal F}_N(k)$. The map $R_k$ extends to $W\big(\widetilde\emb(\phantom U, N)\big)(k)$ by taking limits. Note that if $t_i \to 1$ the resulting tree is given by an operad action and the corresponding image goes to the strata. Thus, $R_k$'s define a right $W(L\mathbb D_m)$-module morphism.

Finally, the morphism $\mathcal F_N \to \widetilde{\mathcal F}_N$ sends the configuration to the same configuration equipped with the stationary path. It is a right $\mathcal F_m$-module map by the very definition. Since paths are contractible, it is a homotopy equivalence.
\end{proof}

\vspace{0.5em}

We proceed with the proof of \hr[Theorem]{main-convergence}. Since with respect to the model structure from~\hr[Section]{model-modules} every module is fibrant, it is enough to pass to the \emph{hairy} cofibrant resolution $\mathcal F^h_M$ of $\mathcal F_M$ from~\cite[p.~1252]{turc13} to construct the remaining horizontal arrows in~\eqref{theorem32-diagram}.

Finally, we need to describe the fiber of the right fibration.

\begin{lemma}
The fibre $F'$ of
$\map_{\mathrm{mod}\text-\mathcal F_m}(\mathcal F^h_M, \widetilde{\mathcal F}_N) \twoheadrightarrow
\map_{\mathrm{mod}\text-\mathcal F_m}(\mathcal F^h_M, \mathcal F^{m\text{-fr}}_N)$
over the image of an embedding $f \in \emb(M, N)$ is homotopy equivalent to
$\Gamma\Big(M, \Path^{fib}_{\sigma_{std}, df}\big(f^*\Frm(N)\big)\Big)$.
\end{lemma}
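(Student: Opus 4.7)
The plan is to reduce the computation of $F'$ to the earlier arity-$1$ lemma about $\widetilde\emb(M,N)\to \emb(M,N)$ by exploiting the fact that the ``path'' component of $\widetilde{\mathcal F}_N$ over $\mathcal F^{m\text{-fr}}_N$ is essentially an arity-$1$ datum; the argument parallels the one at the end of the proof of \hr[Theorem]{main-limit}.

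First, directly from \hr[Definition]{emb-tilde}, the projection $\widetilde{\mathcal F}_N \twoheadrightarrow \mathcal F^{m\text{-fr}}_N$ is an arity-wise Serre fibration whose fibre at arity~$r$ over a framed configuration $(x_1,\dots,x_r)$ is $\prod_{i=1}^r \Path_{\sigma_{std}(x_i),\, \cdot}\!\big(\Frm(N)_{x_i}\big)$. Since $\mathcal F^h_M$ is cofibrant in the projective model structure of \hr[Section]{model-modules} and every module is fibrant, the induced map of mapping spaces is also a fibration; its fibre $F'$ over the morphism induced by $f$ is exactly the space of right $\mathcal F_m$-module-morphism lifts $\phi\colon \mathcal F^h_M\to \widetilde{\mathcal F}_N$ of the fixed underlying morphism $\bar\phi\colon \mathcal F^h_M\to \mathcal F^{m\text{-fr}}_N$.

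The central step is to argue that such a lift $\phi$ is uniquely and freely determined by its arity-$1$ component $\phi_1\colon \mathcal F^h_M(1)\to \widetilde{\mathcal F}_N(1)$. By \hr[Definition]{emb-tilde}, an operadic insertion into $\widetilde{\mathcal F}_N$ acts naturally on the configuration factor and merely duplicates the path labels, while the $\Lambda$-operations that ``forget points'' drop the corresponding path and leave the others unchanged. Consequently, for $\xi\in \mathcal F^h_M(r)$, the only admissible path factor of $\phi_r(\xi)$ is the tuple obtained by applying $\phi_1$ to the $r$ arity-$1$ projections of $\xi$, and this prescription does assemble into a module morphism compatible with~$\bar\phi$.

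Finally, since $\mathcal F^h_M\to \mathcal F_M$ is a weak equivalence, $\mathcal F^h_M(1)\simeq \mathcal F_M(1) = M$, and in arity~$1$ the projection $\widetilde{\mathcal F}_N(1)\to \mathcal F^{m\text{-fr}}_N(1)$ is the fibre-wise path-space fibration $\Path^{fib}_{\sigma_{std},\,\cdot}\!\big(\Frm(N)\big)\to \Frm(N)$. A lift of the arity-$1$ morphism determined by $f$ therefore amounts to choosing, continuously in $x\in M$, a path in $\Frm(N)_{f(x)}$ from $\sigma_{std}(f(x))$ to $df(x)$, which is precisely a section of $\Path^{fib}_{\sigma_{std},df}\!\big(f^*\Frm(N)\big)\to M$; combining the three steps yields $F'\simeq \Gamma\big(M,\Path^{fib}_{\sigma_{std},df}(f^*\Frm(N))\big)$. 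The main obstacle is verifying that an \emph{arbitrary} compatible arity-$1$ lift in fact extends to a right $\mathcal F_m$-module morphism (not merely that the extension is unique if it exists); this is the only place where the precise ``duplicate the deformation'' form of the module structure on $\widetilde{\mathcal F}_N$ is used, and rigorously one has to trace that rule through the generating cells of the hairy cofibrant resolution $\mathcal F^h_M$ of~\cite{turc13}.
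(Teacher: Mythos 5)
Your proposal is correct and follows essentially the same route as the paper: identify $F'$ with the space of right $\mathcal F_m$-module lifts of the morphism $\mathcal F^h_M \to \mathcal F^{m\text{-fr}}_N$ determined by $f$, note that compatibility with the operadic action (in particular the point-forgetting arity-zero operations and the ``duplicate the deformation'' rule) forces the path factor in each arity to be pulled back from arity one along $\mathcal F^h_M(r) \to \mathcal F^h_M(1) = M$, and identify the resulting arity-one datum with a section of $\Path^{fib}_{\sigma_{std}, df}\big(f^*\Frm(N)\big) \to M$. The extension/uniqueness point you flag at the end is handled in the paper at the same level of detail, so there is no substantive difference.
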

\begin{proof}
The fibre of $\map_{\mathrm{mod}\text-\mathcal F_m}(\mathcal F^h_M, \widetilde{\mathcal F}_N) \twoheadrightarrow
\map_{\mathrm{mod}\text-\mathcal F_m}(\mathcal F^h_M, \mathcal F^{m\text{-fr}}_N)$
over a given morphism $\mathcal F_M \to \mathcal F^{m\text{-fr}}_N$ coincides with the space of lifts
\[
\begin{tikzcd}
    &
    \widetilde{\mathcal F}_N
    \arrow[d, two heads]
    \\
    \mathcal F^h_M
    \arrow[r]
    \arrow[ur, dashed]
    &
    \mathcal F^{m\text{-fr}}_N,
\end{tikzcd}
\]
of the morphism of right $\mathcal F_m$-modules. By definition, this is the same as set of lifts
\begin{equation}\label{lifts-arity-r}
\begin{tikzcd}
    &
    \widetilde{\mathcal F}_N(r)
    \arrow[d, two heads]
    \\
    \mathcal F^h_M(r)
    \arrow[r]
    \arrow[ur, dashed]
    &
    \mathcal F^{m\text{-fr}}_N(r)
\end{tikzcd}
\end{equation}
that are compatible with the right $\mathcal F_m$-operadic action.

Note that an embedding $f \in \emb(M, N)$ defines a morphism $\mathcal F^h_M \to \mathcal F^{m\text{-fr}}_N$. Namely, the embedding fixes $m$-frames at configuration points $f(m_1),\ldots, f(m_r)$.

Since the horizontal arrow in~\eqref{lifts-arity-r} fixes the configuration (and frames at the configuration points), the lift $\mathcal F^h_M(r) \dashrightarrow \widetilde{\mathcal F}_N(r)$ is uniquely defined by the path factor, i.e. by a deformation of the standard $m$-frame $\sigma_{std}$ to the $m$-frame $df$ defined by the embedding $f\colon M \to N$ at the configuration points. The deformation of the $m$-frame at the configuration is a map
\begin{equation}\label{deformation-map}
    \mathcal F^h_M(r)\to\Big(\Path^{fib}_{\sigma_{std}, df}\big(f^*\Frm(N)\big)\Big)^{\times r}.
\end{equation}
over $M^{\times r} = \big(\mathcal F^h_M(1)\big)^{\times r}$, where $\mathcal F^h_M(r) \to \big(\mathcal F^h_M(1)\big)^{\times r}$ is given by a product of $0$-arity operadic actions. The map~\eqref{deformation-map} to the product is uniquely defined by projections onto the factors.
\[
\begin{tikzcd}
\mathcal{F}^h_M(r)
\arrow[r]
\arrow[dr]
&
\Big(\Path^{fib}_{\sigma_{std}, df}\big(f^*\Frm(N)\big)\Big)^{\times r}
\arrow[r, "\mathrm{Pr}_j"]
\arrow[d]
&
\Path^{fib}_{\sigma_{std}, df}\big(f^*\Frm(N)\big)
\arrow[d]
\\
&
\big(\mathcal F^h_M(1)\big)^{\times r} = M^{\times r}
\arrow[r, "\mathrm{Pr}_j"]
&
M
\end{tikzcd}
\]
The top line composite factors through
\[
\mathcal F^h_M(r) \to \mathcal F^h_M(1) = M \to \Path^{fib}_{\sigma_{std}, df}\big(f^*\Frm(N)\big).
\]
Thus, the space of lifts is equal to the space of maps $M \to \Path^{fib}_{\sigma_{std}, df}\big(f^*\Frm(N)\big)$ over $M$, i.e.
\[
\Gamma\Big(M, \Path^{fib}_{\sigma_{std}, df}\big(f^*\Frm(N)\big)\Big).
\]
\end{proof}

Finally, identifying the fibers $F$, $F'$ with $\Gamma\Big(M, \Path^{fib}_{\sigma_{std}, df}\big(f^*\Frm(N)\big)\Big)$ it is clear that the map $F \to F'$ induced by the morphism of fibrations
\[
\begin{tikzcd}
\widetilde\emb(M, N)
\arrow[d, two heads]
\arrow[r]
&
\map_{\mathrm{mod}\text-\mathcal F_m}(\mathcal F^h_M, \widetilde{\mathcal F}_N)
\arrow[d, two heads]
\\
\emb(M, N)
\arrow[r]
&
\map_{\mathrm{mod}\text-\mathcal F_m}(\mathcal F^h_M, \mathcal F^{m\text{-fr}}_N)
\end{tikzcd}
\]
is the identity, which concludes the proof of \hr[Theorem]{main-convergence}.
\qed

\section{Passing to graph complexes. Proof of \hr[Theorem]{main-computation}}

\subsection{Motivation} Let $\mathcal F^{m\text{-fr}}_N \to (\mathcal F^{m\text{-fr}}_N)^{\Q}$ be the rationalization morphism. It is expected that under certain conditions on the manifolds the canonical morphism
\[
R\colon\map^h_{\mathrm{mod}\text-\mathcal F_m}(\mathcal F_M, \mathcal F^{m\text{-fr}}_N)
\to
\map^h_{\mathrm{mod}\text-\mathcal F_m}(\mathcal F_M, (\mathcal F^{m\text{-fr}}_N)^{\mathbb Q})
\]
is a component-wise rational weak equivalence (see alsox~\cite[Theorem~1.2]{f-t-w20}). Therefore, description of the latter gives (potentially) the description of the rational homotopy type of
$\map^h_{\mathrm{mod}\text-\mathcal F_m}(\mathcal F_M, \mathcal F^{m\text{-fr}}_N)$,
and consequentely of the embedding space $\widetilde{\emb}(M, N)$.

\subsection{Proof of \hr[Theorem]{main-computation}}

\pasttheorem{main-computation}

\begin{proof}
We start from passing to the algebraic world via Quillen adjunction (see~\cite{will24})
\begin{align*}
	\map^h_{\mathrm{mod}\text-\mathcal F_m} (\mathcal F_M, \mathcal F_N^{\mathbb Q})
	\coloneqq
	\map^h_{\mathrm{mod}\text-\mathcal F_m} (\mathcal F_M, LG_{\sbullet}R\Omega_{\#}\mathcal F_N)
	\simeq
	\map^h_{\mathrm{dgHopf}\Omega_{\#}(\mathcal F_m)\text-\mathrm{comod}}(R\Omega_{\#}\mathcal F_N, R\Omega_{\#}\mathcal F_M).
\end{align*}

Since there is a weak equivalence $\Omega_{\#}(\mathcal F_m) \simeq e_m^c$ (see~\cite{fr-wi20}), we have an equivalence of the corresponding comodule categories (see~\cite[Theorem A.5]{will24}). In  particular, the latter mapping space is equivalent to
\[
	\map^h_{\mathrm{dgHopf}\Omega_{\#}(\mathcal F_m)\text-\mathrm{comod}}(R\Omega_{\#}\mathcal F_N, R\Omega_{\#}\mathcal F_M)
	\simeq
	\map^h_{\mathrm{dgHopf}e_m^c\text-\mathrm{comod}}(B_N, B_M),
\]
where $B_N$ and $B_M$ are $e_m^c$-comodules corresponding to $R\Omega_{\#}\mathcal F_N$ and $R\Omega_{\#}\mathcal F_M$, respectively.
Recall that $\cores^{e_m^c}_{e_n^c}(\Graphs^Z_{H^{\sbullet}, n})$ defines a cofibrant resolution for $B_N$ in the category of dg Hopf $e_m^c$-comodules. As for the target, we do not need a specific rational model, so denote by $\widehat R_M$ a fibrant rational model for $\mathcal F_M$. Thus, we get
\begin{align*}
	\map^h_{\mathrm{dgHopf}e_m^c\text-\mathrm{comod}}%
	(R\Omega_{\#}\mathcal F_N, R\Omega_{\#}\mathcal F_M)
	\coloneqq
	\map_{\mathrm{dgHopf}e_m^c\text-\mathrm{comod}}%
	(\cores^{e_m^c}_{e_n^c}(\Graphs^Z_{H^{\sbullet}, n}), \widehat{R}_M).
\end{align*}

In our codimension range $n - m \geqslant 2$ the canonical morphism $e_n^c \to e_m^c$ factors through $\Com^c$ (see~\cite{fr-wi20}). Therefore, we can factorise $\cores$ above as
\[
\cores^{e_m^c}_{e_n^c} = \cores^{e_m^c}_{\Com^c}\circ \cores^{\Com^c}_{e_n^c}.
\]
Using $\cores$-$\coind$-adjunction (see~\cite[Proposition~3.13]{will23}) we get a weak equivalence
\begin{align*}
	\map_{\mathrm{dgHopf}e_m^c\text-\mathrm{comod}}&%
	(\cores^{e_m^c}_{e_n^c}(\Graphs^Z_{H^{\sbullet}, n}), \widehat{R}_M)
	\\
	&=
	\map_{\mathrm{dgHopf}e_m^c\text-\mathrm{comod}}%
	(\cores^{e_m^c}_{\Com^c}\circ\cores^{\Com^c}_{e_n^c}(\Graphs^Z_{H^{\sbullet}, n}), \widehat{R}_M)
	\\
	&\simeq
	\map_{\mathrm{dgHopf}\Com^c\text-\mathrm{comod}}%
	\big(\cores^{\Com^c}_{e_n^c}(\Graphs^Z_{H^{\sbullet}, n}), \coind^{\Com^c}_{e_m^c}(\widehat R_M)\big)
	\\
	&\simeq
	\map_{\mathrm{dgHopf}\Com^c\text-\mathrm{comod}}%
	(\cores^{\Com^c}_{e_n^c}(\Graphs^Z_{H^{\sbullet}, n}), \mathbb F_{A_M}),
	\end{align*}
where $A_M$ is a Poincaré duality rational model for $M$ (see~\cite{la-st08}). The last weak equivalence is due to the fact the $\widehat R_M$ is of configuration space type (see~\cite{will23}).

The quasi-freeness of $\cores^{\Com^c}_{e_n^c}(\Graphs^Z_{H^{\sbullet}, n})$ as a dg Hopf $\Com^c$-comodule implies the following proposition.

\begin{proposition}[{\cite[Proposition~9.1]{will23}}]\label{hgc-bijection}
	There is a bijection
	\begin{align*}
		\varphi\colon
		\mor_{\mathrm{gHopf}\Com^c\text-\mathrm{comod}/\Omega^*(\Delta^{\sbullet})}
		\big(\cores^{\Com^c}_{e_n^c}(\Graphs^Z_{H^{\sbullet}, n})\otimes\Omega^*(\Delta^{\sbullet}), \mathbb F_{A_M}\otimes\Omega^*(\Delta^{\sbullet})\big)
		\\
		\to
		\mor_{g\mathbb S seq/\Omega^*(\Delta^{\sbullet})}
		\big(\pIG^Z_{H^{\sbullet}, n}\otimes\Omega^*(\Delta^{\sbullet}), \mathbb F_{A_M}\otimes\Omega^*(\Delta^{\sbullet})\big)
	\end{align*}
	that sends a morphism $F$ on the left-hand side to the composition with the inclusion of generators
	\[
	\pIG^Z_{H^{\sbullet}, n} \hookrightarrow \Graphs^Z_{H^{\sbullet}, n} \to \mathbb F_{A_M}.
	\]
\end{proposition}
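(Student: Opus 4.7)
The plan is to exploit the quasi-freeness of $\cores^{\Com^c}_{e_n^c}(\Graphs^Z_{H^{\sbullet}, n})$ as a dg Hopf $\Com^c$-comodule. By the very construction of the graph comodule, its underlying graded commutative algebra is free on the symmetric sequence $\IG^Z_{H^{\sbullet}, n}$ of internally connected graphs, and the $\Com^c$-coaction is compatible with this free structure in such a way that the ambient Hopf $\Com^c$-comodule is ``freely generated'' by the subsequence $\pIG^Z_{H^{\sbullet}, n}$ of \emph{primitive} internally connected graphs --- those which remain internally connected after applying any non-trivial cocomposition of $\Com^c$. Consequently I expect the desired bijection to be an instance of a free--forgetful adjunction between dg Hopf $\Com^c$-comodules and graded symmetric sequences.

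First I would formulate and prove the universal property ignoring the simplicial parameter: for any graded Hopf $\Com^c$-comodule $B$, restriction to $\pIG^Z_{H^{\sbullet}, n}$ defines a bijection
\[
\mor_{\mathrm{gHopf}\Com^c\text-\mathrm{comod}}\bigl(\cores^{\Com^c}_{e_n^c}(\Graphs^Z_{H^{\sbullet}, n}), B\bigr)
\xrightarrow{\cong}
\mor_{g\mathbb S seq}\bigl(\pIG^Z_{H^{\sbullet}, n}, B\bigr).
\]
Injectivity follows because the source is generated as a Hopf $\Com^c$-comodule by its primitives, by a direct induction on the number of edges plus internal vertices. For surjectivity, given $g\colon \pIG^Z_{H^{\sbullet}, n} \to B$ I would construct the extension in two stages: first extend $g$ to all of $\IG^Z_{H^{\sbullet}, n}$ using the cofree $\Com^c$-coaction on the target, then extend multiplicatively to $\Graphs^Z_{H^{\sbullet}, n} = S(\IG^Z_{H^{\sbullet}, n})$.

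Second, I would base-change along the functor of polynomial forms $\Omega^*(\Delta^{\sbullet})$. Since both $\mor$-sets in the proposition are computed levelwise in simplicial degree and both are $\Omega^*(\Delta^{\sbullet})$-linear, tensoring the universal property with $\Omega^*(\Delta^{\sbullet})$ and specialising $B = \mathbb F_{A_M}\otimes\Omega^*(\Delta^{\sbullet})$ yields the stated bijection. Naturality in the simplicial parameter is automatic.

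The main obstacle is verifying that the two-stage extension really produces a morphism of dg Hopf $\Com^c$-comodules, i.e.\ that the multiplicative extension is simultaneously $\Com^c$-colinear and commutes with the differential. This reduces to checking the identities on each generator $\gamma \in \pIG^Z_{H^{\sbullet}, n}$, where the cocomposition $\Delta_s(\gamma)$ is by construction a sum over subgraphs that decompose as disjoint unions of internally connected graphs, matching the product of the corresponding images in $B$. The differential compatibility is slightly more delicate because $\mathrm d\gamma$ can be a disconnected graph (when contracting a bridge whose two halves are both non-trivial), but this is again absorbed by the multiplicative extension. This is carried out in detail in \cite[Section~9]{will23} by induction on the filtration by the number of internal connected components, and I would follow that argument verbatim.
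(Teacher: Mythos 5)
The paper does not actually prove this proposition: it invokes the quasi-freeness of $\cores^{\Com^c}_{e_n^c}(\Graphs^Z_{H^{\sbullet}, n})$ and cites \cite[Proposition~9.1]{will23}. Your core strategy --- a free/forgetful universal property (a graded Hopf $\Com^c$-comodule map out of the quasi-free source is freely and uniquely determined by its values on the generating symmetric sequence), followed by base change along $\Omega^*(\Delta^{\sbullet})$ --- is exactly the justification the paper and Willwacher have in mind, and that part of your outline is sound.

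However, your final paragraph contains a genuine conceptual error. The morphism sets in the statement are \emph{graded} Hopf $\Com^c$-comodule morphisms ($\mathrm{gHopf}$, not $\mathrm{dgHopf}$) and graded symmetric sequence morphisms: no compatibility with differentials is required on either side, so your ``main obstacle'' of checking that the two-stage extension ``commutes with the differential'' is not part of what must be verified --- and it cannot be verified in general. An arbitrary graded map $\pIG^Z_{H^{\sbullet}, n} \to \mathbb F_{A_M}$ has a unique multiplicative, $\Com^c$-colinear extension, but that extension is almost never a chain map; the failure is precisely what is recorded afterwards, where imposing the dg condition is shown to be equivalent to the Maurer--Cartan equation in $\hgc^Z_{A_M, H^{\sbullet}, n}$ (\hr[Proposition]{hgc-l-infty}). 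If you insisted on dg morphisms on the left-hand side, $\varphi$ would fail to be surjective and the proposition (and the subsequent $\MC_{\sbullet}$ description) would collapse. What you do need to check is only that the extension is well defined, multiplicative and colinear --- i.e.\ that the values on graphs outside $\pIG^Z_{H^{\sbullet}, n}$ (disconnected graphs, and internally connected graphs not in $\pIG$, whose values are forced through the $\Lambda$/cocomposition structure) are consistently determined by the \emph{coaction of the target} $\mathbb F_{A_M}$; calling that coaction ``cofree'' is also unjustified and unnecessary --- quasi-freeness of the source, not cofreeness of the target, is what drives the bijection.
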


Note that the proposition above only deals with graded Hopf $\Com^c$-comodule morphisms. To get an actual dg Hopf $\Com^c$-comodule morphism we need it in addition to commute with differetials. The latter leads us to the Maurer-Cartan space.

\begin{proposition}[{\cite[Corollary~9.2]{will23}}]\label{hgc-l-infty}
There is a filtered $L_\infty$-structure on $\hgc_{A_M, H^{\sbullet}, n}^Z$ such that
\[
	\map_{\mathrm{dgHopf}e_m^c\text-\mathrm{comod}}
	(\cores^{e_m^c}_{e_n^c}(\Graphs^Z_{H^{\sbullet}, n}), \widehat{R}_M)
	\cong
	\MC_{\sbullet}(\hgc_{A_M, H^{\sbullet}, n}^Z).
\]
\end{proposition}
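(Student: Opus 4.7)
The strategy is to lift the bijection of \hr[Proposition]{hgc-bijection} from a bijection of graded morphism sets to an identification at the dg and simplicial level, and then to read off the differential-compatibility condition as a Maurer--Cartan equation for a suitable filtered $L_\infty$-structure on $\hgc^Z_{A_M, H^{\sbullet}, n}$.

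First, a $p$-simplex of the left-hand mapping space is, by definition, a dg Hopf $\Com^c$-comodule morphism from $\cores^{e_m^c}_{e_n^c}(\Graphs^Z_{H^{\sbullet}, n})\otimes \Omega^*(\Delta^p)$ to $\widehat R_M\otimes \Omega^*(\Delta^p)$; after the reductions carried out earlier this is equivalently modelled over $\mathbb F_{A_M}\otimes \Omega^*(\Delta^p)$. Applying \hr[Proposition]{hgc-bijection} identifies the set of such $p$-simplices with morphisms of graded symmetric sequences $\pIG^Z_{H^{\sbullet}, n}\otimes \Omega^*(\Delta^p) \to \mathbb F_{A_M}\otimes \Omega^*(\Delta^p)$. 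Such a morphism is exactly an assignment, to each internally connected hairy graph $\Gamma$ with $r$ external vertices, of a symmetric tensor in $A_M^{\otimes r}\otimes \Omega^*(\Delta^p)$ of the correct degree. Dualising the external legs via the Poincaré duality pairing on $A_M$, this datum is the same as a degree-$1$ element
\[
\xi \in \bigl(\hgc^Z_{A_M, H^{\sbullet}, n}\otimes \Omega^*(\Delta^p)\bigr)^{1}
\]
of the hairy graph complex, whose external vertices now carry $A_M$-decorations.

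Next, the condition that $\xi$ corresponds to an actual dg (not merely graded) comodule morphism unpacks into a single polynomial equation in $\xi$. The differential on $\Graphs^Z_{H^{\sbullet}, n}$ contracts internal edges, the $\Com^c$-coaction glues multiple copies of $\xi$ along external vertices, and the product on $A_M$ multiplies the external decorations created by these gluings. Grouping the resulting obstruction by the number of copies of $\xi$ produces an equation of the shape
\[
d\xi + \sum_{k \geqslant 2} \tfrac{1}{k!}\ell_k(\xi, \ldots, \xi) = 0,
\]
which I would take as the definition of the brackets $\ell_k$. Filteredness with respect to, for example, the loop order (or total number of edges) of hairy graphs is immediate, since each $\ell_k$ strictly increases complexity, so on each filtration layer the sum is finite and the Maurer--Cartan equation is well-defined. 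Compatibility with the faces and degeneracies of $\Omega^*(\Delta^{\sbullet})$ is automatic from the construction, which identifies the simplicial set of such $\xi$ with $\MC_{\sbullet}(\hgc^Z_{A_M, H^{\sbullet}, n})$.

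The main obstacle is verifying the generalised Jacobi identities for the operations $\ell_k$ extracted in this way. These relations are, however, forced by construction: the $\ell_k$ are the Taylor coefficients of the obstruction polynomial measuring the failure of a graded morphism to be a dg morphism, and the square-zero property of the ambient differential on $\cores^{e_m^c}_{e_n^c}(\Graphs^Z_{H^{\sbullet}, n})\otimes \Omega^*(\Delta^{\sbullet})$, together with the coassociativity of the $\Com^c$-cocomposition and the associativity of the product in $A_M$, yields the $L_\infty$-relations after unwinding. The substantive geometric input — Poincaré duality for $M$, the graph model of $H^{\sbullet}(\mathcal F_N)$, and the $e_n^c$-coaction on $\Graphs^Z_{H^{\sbullet}, n}$ — has already been packaged in the earlier propositions, so what remains is an (admittedly intricate) combinatorial bookkeeping exercise rather than further geometry.
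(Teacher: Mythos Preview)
Your proposal is correct and follows essentially the same route as the paper. The only cosmetic difference is packaging: where you read off the brackets $\ell_k$ directly as the Taylor coefficients of the obstruction polynomial and appeal to the square-zero property for the Jacobi relations, the paper encodes the same data via the generating function formalism of \cite{fr-wi20-1}, setting $\mathcal U(x)=\big[\mathrm d_{\widehat R_M}\circ\Phi(x)-\Phi(x)\circ\mathrm d_{\Graphs^Z_{H^{\sbullet},n}}\big]\circ\iota$ and recovering the $\ell_k$ by polarization; the $L_\infty$-relations then amount to the single identity $\mathcal U^{R[\varepsilon]}\big(x+\varepsilon\,\mathcal U^R(x)\big)=\mathcal U^{R[\varepsilon]}(x)$, which is again a repackaging of the square-zero argument you sketch.
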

\end{proof}

\subsection{Digression: recollections on $L_\infty$-algebras}
In this section we remind the construction of the generating function for $L_\infty$-algebras (see~\cite[Section~4.1]{fr-wi20-1}).

Let $L$ be a complete filtered $L_\infty$ algebra with the structure operations
\begin{equation}\label{l-infty-structure-operations}
	l_n\colon S^n\big(L[1]\big) \to L[1], n\geq 1.
\end{equation}
The complete filtration ensures the convergence of the series
\[
	\mathcal U(x) \coloneqq \sum_{n \geqslant 1} \frac{1}{n!} l_n(x, \ldots, x).
\]

Let $R$ be a graded commutative algebra. The complete tensor product $L \hat{\otimes} R$ is again an $L_\infty$ algebra equipped with a complete compatible filtration. Extending the coefficients $R$-linearly, we get the function
\[
	\mathcal U^R \colon (L\hat{\otimes} R)^1 \to (L\hat{\otimes} R)^2.
\]
The structure operations~\eqref{l-infty-structure-operations} can be recovered from $\mathcal U^R$ by graded polarization. Namely, for a collection $x_1, \ldots, x_n \in L$ of homogeneous elements, we consider the graded algebra $R = \Q[\varepsilon_1, \ldots, \varepsilon_n]$ generated by variables of degrees $|\varepsilon_i| = 1 - |x_i|$. Then $\pm l_n(x_1, \ldots, x_n)$ is the coefficient of the monomial $\varepsilon_1\cdots \varepsilon_n$ in $\mathcal U^R(x_1\varepsilon_1+\cdots+x_n \varepsilon_n)$.

Moreover, the structure relations are equivalent to the relation
\[
	\mathcal U^{R[\varepsilon]}\big(x + \varepsilon \mathcal U^R(x)\big) = \mathcal U^{R[\varepsilon]}(x)
\]
for the power series $\mathcal U^R$, for any graded commutative algebra $R$, any element $x \in (L\hat{\otimes} R)^1$, where $\varepsilon$ is a formal variable of degree $-1$.

\subsection{Combinatorial description for the $L_\infty$-structure on the hairy graph complex}

Here we give combinatorial description on $\hgc_{A_M, H^{\sbullet}, n}^Z$ from \hr[Proposition]{hgc-l-infty}.

Let $\Phi$ be the isomorphism inverse to $\varphi$ from \hr[Proposition]{hgc-bijection}
\[
	\Phi\colon
	\mor
	\big(\pIG^Z_{H^{\sbullet}, n}\otimes\Omega^*(\Delta^{\sbullet}), \mathbb F_{A_M}\otimes\Omega^*(\Delta^{\sbullet})\big)
	\xrightarrow{\cong} 
	\mor
	\big(\cores^{\Com^c}_{e_n^c}(\Graphs^Z_{H^{\sbullet}, n})\otimes\Omega^*(\Delta^{\sbullet}), \mathbb F_{A_M}\otimes\Omega^*(\Delta^{\sbullet})\big).
\]
Then the $L_\infty$-structure is defined by the generating function
\[
	\mathcal U^{\Omega^*(\Delta^{\sbullet})}\colon \big(\hgc_{A_M, H^{\sbullet}, n}\hat{\otimes}\Omega^*(\Delta^{\sbullet})\big)^1 \to \big(\hgc_{A_M, H^{\sbullet}, n}\hat{\otimes}\Omega^*(\Delta^{\sbullet})\big)^2
\]
defined by the formula
\[
	\mathcal U^{\Omega^*(\Delta^{\sbullet})}(x)
	\coloneqq
	\big[\mathrm d_{\widehat{R}_M}\circ \Phi(x) - \Phi(x)\circ \mathrm d_{\Graphs_{H^{\sbullet}, n}^Z} \big]\circ \iota,
\]
where $\iota\colon \pIG_{H^{\sbullet}, n}^Z\hookrightarrow \Graphs_{H^{\sbullet}, n}^Z$ is the canonical inclusion. 
Note that $\mathrm d_{\Graphs_{H^{\sbullet}, n}^Z} = \mathrm d_{\Graphs_{H^{\sbullet}, n}} + (Z\cdot)$. 
We can further decompose the differential $\mathrm d_{\Graphs_{H^{\sbullet}, n}^Z}$ with respect to the internally connected generators
\[
	\mathrm d_{\Graphs_{H^{\sbullet}, n}^Z}
	=
	\mathrm d_{int}
	+
	\sum_{k \geqslant 1} \mathrm d_{ext}^k
	+
	\sum_{k \geqslant 1} (Z\cdot)^k.
\]
Here $\mathrm d_{int}$ is the part of differential contracting internal edges, in particular, it leaves the graph internally connected. The summands $\mathrm d_{ext}^k$ (resp. $(Z\cdot)^k$) correspond to the part of the differential (resp. $(Z\cdot)$) that sends the generators $\IG_{H^{\sbullet}, n}$ to $S^k(\IG_{H^{\sbullet}, n})$ induced by contracting an edge between internal and external vertices (resp. "cutting off" a subgraph isomorphic to $Z$):
\begin{equation}\label{d-z-k}
\begin{alignedat}{3}
\mathrm d_{ext}^k&\colon
\begin{tikzpicture}[baseline=-.8ex]
	\node[draw,circle] (v) at (0,.3) {$\Gamma_1$};
	\node (v2) at (1.25,.3) {$\scriptstyle \cdots$};
	\node[draw,circle] (v1) at (2.5,.3) {$\Gamma_k$};
	\node[ext] (w1) at (-.7,-.5) {};
	\node[ext] (w2) at (-.25,-.5) {};
	\node[ext] (w4b) at (1.25,-.5) {};
	\node[int] (w4) at (1.25,-.2) {};
	\node[ext] (w6) at (2.75,-.5) {};
	\node[ext] (w7) at (3.2,-.5) {};
	\draw (v) edge (w1)  edge (w2) edge[bend left] (w4) edge[bend right] (w4) edge (w4)
	(v1) edge (w4) edge[bend left] (w4) edge (w6) edge (w7)
	(w4b) edge (w4)
	(v2) edge[bend left] (w4) edge (w4);
\end{tikzpicture}
&&\mapsto
&&\begin{tikzpicture}[baseline=-.8ex]
	\node[draw,circle] (v) at (0,.3) {$\Gamma_1$};
	\node (v2) at (1.25,.3) {$\scriptstyle \cdots$};
	\node[draw,circle] (v1) at (2.5,.3) {$\Gamma_k$};
	\node[ext] (w1) at (-.7,-.5) {};
	\node[ext] (w2) at (-.25,-.5) {};
	\node[ext] (w4b) at (1.25,-.5) {};
	\node[ext] (w6) at (2.75,-.5) {};
	\node[ext] (w7) at (3.2,-.5) {};
	\draw (v) edge (w1)  edge (w2) edge[bend left] (w4b) edge[bend right] (w4b) edge (w4b)
	(v1) edge (w4b) edge[bend left] (w4b) edge (w6) edge (w7)
	(v2) edge[bend left] (w4b) edge (w4b);
\end{tikzpicture};
\\
(Z\cdot)^k&\colon
\hspace{3.25em}
\begin{tikzpicture}[baseline=-.8ex]
	\node[draw,circle] (v) at (0,.3) {$\Gamma$};
	\node[ext] (w1) at (-.7,-.5) {};
	\node[ext] (w2) at (-.35,-.5) {};
	\node[label=center:{$\scriptstyle\cdots$}] (w3) at (0,-.5) {};
	\node[ext] (w4) at (.35,-.5) {};
	\node[ext] (w5) at (.7,-.5) {};
	\draw (v) edge[bend right] (w1) edge (w2) edge (w4) edge[bend left] (w5);
\end{tikzpicture}
&&\mapsto
\sum\pm
&&\begin{tikzpicture}[baseline=-.8ex]
	\node[draw,circle,dashed] (v) at (0,1.3) {$Z$};
	\node[draw,circle] (v1) at (-1.2,.3) {$\scriptstyle\Gamma_1$};
	\node[label=center:{$\cdots$}] (v2) at (0,.3) {};
	\node[draw,circle] (v3) at ( 1.2,.3) {$\scriptstyle\Gamma_k$};
	\node[ext] (w11) at (-1.9,-0.5) {};
	\node[ext] (w12) at (-1.55,-0.5) {};
	\node[label=center:{$\scriptstyle\cdots$}] (w13) at (-1.2,-0.5) {};
	\node[ext] (w14) at (-.85,-0.5) {};
	\node[ext] (w15) at (-.5,-0.5) {};
	\node[ext] (w21) at (.5,-0.5) {};
	\node[ext] (w22) at (.85,-0.5) {};
	\node[label=center:{$\scriptstyle\cdots$}] (w23) at (1.2,-0.5) {};
	\node[ext] (w24) at (1.55,-0.5) {};
	\node[ext] (w25) at (1.9,-0.5) {};
	\node[ext] (w1) at (2.4,-0.5) {};
	\node[label=center:{$\scriptstyle\cdots$}] (w2) at (2.75,-0.5) {};
	\node[ext] (w3) at (3.1,-0.5) {};
	\draw (v) edge[dashed,bend right] (v1)
			  edge[dashed] (v1)
			  edge[dashed,bend left] (v1)
			  edge[dashed,bend right] (v3)
			  edge[dashed] (v3)
			  edge[dashed,bend left] (v3)
			  edge[dashed,bend left] (w1)
			  edge[dashed,bend left] (w3);
	\draw (v1) edge[bend right] (w11)
			   edge (w12)
			   edge (w14)
			   edge[bend left] (w15);
	\draw (v3) edge[bend right] (w21)
			   edge (w22)
			   edge (w24)
			   edge[bend left] (w25);
	\draw[dashed,thick] (-1.7,.8) -- (2.75,.8);
\end{tikzpicture}.
\end{alignedat}
\end{equation}

In particular, the structure morphisms $l_k$, $k\geqslant 2$ have the following from
\[
	l_k = l_k^{std} + l_k^Z,
\]
where $l_k^{std}$ is the standard "untwisted" $L_\infty$-structure morphism defined by
\[
	l_k^{std} \coloneqq -\Phi(x)\circ \mathrm d_{ext}^k \circ \iota,
\]
and $l_k^Z$ is the part related to the twist by~$Z$:
\[
	l_k^{Z} \coloneqq -\Phi(x)\circ (Z\cdot)^k \circ \iota.
\]
Thus, dualizing~\eqref{d-z-k} we get our structure morphisms:

\begin{alignat}{3}
l^{std}_k&\Big(
\begin{tikzpicture}[baseline=-.8ex]
	\node[draw,circle] (v) at (0,.3) {$\Gamma_1$};
	\node[ext,label=270:{\scalebox{0.5}{$a^1_1$}}] (w1) at (-.7,-.5) {};
	\node[ext,label=270:{\scalebox{0.5}{$a^1_2$}}] (w2) at (-.35,-.5) {};
	\node[label=center:{$\scriptstyle\cdots$}] (w3) at (0,-.5) {};
	\node[ext,label=270:{\scalebox{0.5}{$a^1_{j_1-1}$}}] (w4) at (.35,-.5) {};
	\node[ext,label=270:{\scalebox{0.5}{$a^1_{j_1}$}}] (w5) at (.7,-.5) {};
	\draw (v) edge[bend right] (w1) edge (w2) edge (w4) edge[bend left] (w5);
\end{tikzpicture};
\cdots;
\begin{tikzpicture}[baseline=-.8ex]
	\node[draw,circle] (v) at (0,.3) {$\Gamma_k$};
	\node[ext,label=270:{\scalebox{0.5}{$a^k_1$}}] (w1) at (-.7,-.5) {};
	\node[ext,label=270:{\scalebox{0.5}{$a^k_2$}}] (w2) at (-.35,-.5) {};
	\node[label=center:{$\scriptstyle\cdots$}] (w3) at (0,-.5) {};
	\node[ext,label=270:{\scalebox{0.5}{$a^k_{j_1-1}$}}] (w4) at (.35,-.5) {};
	\node[ext,label=270:{\scalebox{0.5}{$a^k_{j_1}$}}] (w5) at (.7,-.5) {};
	\draw (v) edge[bend right] (w1) edge (w2) edge (w4) edge[bend left] (w5);
\end{tikzpicture}
\Big)
&&=
\sum
\pm
&&
\hspace{.35em}
\begin{tikzpicture}[baseline=-.8ex]
	\node[draw,circle] (v1) at (0,.3) {$\Gamma_1$};
	\node[draw,circle] (v2) at (2.5,.3) {$\Gamma_k$};
	\node[ext,label=270:{$\scriptstyle a$}] (w1) at (-.5,-.5) {};
	\node[ext,label=270:{$\scriptstyle a$}] (w2) at (.2,-.5) {};
	\node[ext,label=270:{\scalebox{0.5}{$\prod a$}}] (w3) at (1.25,-.5) {};
	\node[ext,label=270:{$\scriptstyle a$}] (w4) at (2.3,-.5) {};
	\node[ext,label=270:{$\scriptstyle a$}] (w5) at (3,-.5) {};
	\node[int] (w) at (1.25,-.2) {};
	\node at (1.25,.3) {$\scriptstyle \cdots$};
	\node at (-.15,-.5) {$\scriptstyle \cdots$};
	\node at (2.65,-.5) {$\scriptstyle \cdots$};
	\draw (v1) edge[bend left] (w) edge[bend right] (w) edge (w);
	\draw (v2) edge[bend right] (w) edge[bend left] (w);
	\draw (w) edge (w3);
	\draw (v1) edge[bend right] (w1) edge (w2);
	\draw (v2) edge (w4) edge[bend left] (w5);
\end{tikzpicture};
\label{lkstd}\\
l^Z_k&\Big(
\begin{tikzpicture}[baseline=-.8ex]
	\node[draw,circle] (v) at (0,.3) {$\Gamma_1$};
	\node[ext,label=270:{\scalebox{0.5}{$a^1_1$}}] (w1) at (-.7,-.5) {};
	\node[ext,label=270:{\scalebox{0.5}{$a^1_2$}}] (w2) at (-.35,-.5) {};
	\node[label=center:{$\scriptstyle\cdots$}] (w3) at (0,-.5) {};
	\node[ext,label=270:{\scalebox{0.5}{$a^1_{j_1-1}$}}] (w4) at (.35,-.5) {};
	\node[ext,label=270:{\scalebox{0.5}{$a^1_{j_1}$}}] (w5) at (.7,-.5) {};
	\draw (v) edge[bend right] (w1) edge (w2) edge (w4) edge[bend left] (w5);
\end{tikzpicture};
\cdots;
\begin{tikzpicture}[baseline=-.8ex]
	\node[draw,circle] (v) at (0,.3) {$\Gamma_k$};
	\node[ext,label=270:{\scalebox{0.5}{$a^k_1$}}] (w1) at (-.7,-.5) {};
	\node[ext,label=270:{\scalebox{0.5}{$a^k_2$}}] (w2) at (-.35,-.5) {};
	\node[label=center:{$\scriptstyle\cdots$}] (w3) at (0,-.5) {};
	\node[ext,label=270:{\scalebox{0.5}{$a^k_{j_1-1}$}}] (w4) at (.35,-.5) {};
	\node[ext,label=270:{\scalebox{0.5}{$a^k_{j_1}$}}] (w5) at (.7,-.5) {};
	\draw (v) edge[bend right] (w1) edge (w2) edge (w4) edge[bend left] (w5);
\end{tikzpicture}
\Big)
&&=
\sum
\pm
&&\begin{tikzpicture}[baseline=-.8ex]
	\node[draw,circle] (v) at (0,1.3) {$Z$};
	\node[draw,circle] (v1) at (-1.2,.3) {$\scriptstyle\Gamma_1$};
	\node[label=center:{$\cdots$}] (v2) at (0,.3) {};
	\node[draw,circle] (v3) at ( 1.2,.3) {$\scriptstyle\Gamma_k$};
	\node[ext,label=270:{\scalebox{0.5}{$a^1_1$}}] (w11) at (-1.9,-0.5) {};
	\node[ext,label=270:{\scalebox{0.5}{$a^1_2$}}] (w12) at (-1.55,-0.5) {};
	\node[label=center:{$\scriptstyle\cdots$}] (w13) at (-1.2,-0.5) {};
	\node[ext,label=270:{\scalebox{0.5}{$a^1_{j_1-1}$}}] (w14) at (-.85,-0.5) {};
	\node[ext,label=270:{\scalebox{0.5}{$a^1_{j_1}$}}] (w15) at (-.5,-0.5) {};
	\node[ext,label=270:{\scalebox{0.5}{$a^k_1$}}] (w21) at (.5,-0.5) {};
	\node[ext,label=270:{\scalebox{0.5}{$a^k_2$}}] (w22) at (.85,-0.5) {};
	\node[label=center:{$\scriptstyle\cdots$}] (w23) at (1.2,-0.5) {};
	\node[ext,label=270:{\scalebox{0.5}{$a^k_{j_k-1}$}}] (w24) at (1.55,-0.5) {};
	\node[ext,label=270:{\scalebox{0.5}{$a^k_{j_k}$}}] (w25) at (1.9,-0.5) {};
	\node[ext,label=270:{\scalebox{0.5}{$1$}}] (w1) at (2.4,-0.5) {};
	\node[label=center:{$\scriptstyle\cdots$}] (w2) at (2.75,-0.5) {};
	\node[ext,label=270:{\scalebox{0.5}{$1$}}] (w3) at (3.1,-0.5) {};
	\draw (v) edge[bend right] (v1)
	edge (v1)
	edge[bend left] (v1)
	edge[bend right] (v3)
	edge (v3)
	edge[bend left] (v3)
	edge[bend left] (w1)
	edge[bend left] (w3);
	\draw (v1) edge[bend right] (w11)
	edge (w12)
	edge (w14)
	edge[bend left] (w15);
	\draw (v3) edge[bend right] (w21)
	edge (w22)
	edge (w24)
	edge[bend left] (w25);
\end{tikzpicture}.
\label{lkz}
\end{alignat}
Where in~\eqref{lkstd} the sum runs over all possible non-empty subsets of hairs of $\Gamma_1, \ldots, \Gamma_k$, glues given subsets to a new internal vertex that is connected to a new external vertex labeled by the product of the corresponding hair labels, and the sum in~\eqref{lkz}
runs over all possible ways to attach hairs of $Z$ to $\Gamma_1, \ldots, \Gamma_k$ to obtain an internally connected graph, with hairs of $Z$ labeled by~$1$ allowed.

Finally, we describe the differential
\[
	\mathrm d =	\delta_{\widehat{R}_M} + \delta_{split} + \delta_{join} + \delta_{Z} \coloneqq \big[\mathrm d_{\widehat{R}_M}\circ \Phi(x) - \Phi(x)\circ (\mathrm d_{int} + \mathrm d_{ext}^1 + (Z\cdot)^1)\big]\circ \iota.
\]
The first part is induced by the inner $\widehat{R}_M$ differential. The second part is induced by splitting internal vertices in all possible ways (and distributing the labels). The last two are similar to~\eqref{lkstd} and \eqref{lkz}:
\[
\delta_{join}%
\begin{tikzpicture}[baseline=-.8ex]
	\node[draw,circle] (v) at (0,.3) {$\Gamma$};
	\node[ext,label=270:{\scalebox{0.5}{$a_1$}}] (w1) at (-.7,-.5) {};
	\node[ext,label=270:{\scalebox{0.5}{$a_2$}}] (w2) at (-.35,-.5) {};
	\node[ext,label=270:{\scalebox{0.5}{$a_{k-1}$}}] (w4) at (.35,-.5) {};
	\node[ext,label=270:{\scalebox{0.5}{$a_k$}}] (w5) at (.7,-.5) {};
	\node[label=center:{$\scriptstyle\cdots$}] at (0,-.5) {};
	\draw (v) edge[bend right] (w1) edge (w2) edge (w4) edge[bend left] (w5);
\end{tikzpicture}
=
\sum_{\substack{S \subseteq \text{Hairs} \\ |S|\geqslant 2}}
\begin{tikzpicture}[baseline=-.8ex]
	\node[draw,circle] (v) at (0,.3) {$\Gamma$};
	\node[ext,label=270:{\scalebox{0.7}{$a_{i_1}$}}] (w1) at (-.7,-.5) {};
	\node[ext,label=270:{\scalebox{0.7}{$a_{i_2}$}}] (w2) at (-.35,-.5) {};
	\node[ext,label=270:{\scalebox{0.7}{$\prod_{j\in S} a_j$}}] (w3) at (.53,-.5) {};
	\node[label=center:{$\scriptstyle\cdots$}] at (0,-.5) {};
	\node[int] (w) at (.53,-.2) {};
	\draw (v) edge[bend right] (w1)
			  edge (w2)
			  edge (w)
			  edge[bend left] (w)
			  edge[bend right] (w);
	\draw (w) edge (w3);
\end{tikzpicture};
\qquad
\delta_{Z}%
\begin{tikzpicture}[baseline=-.8ex]
	\node[draw,circle] (v) at (0,.3) {$\Gamma$};
	\node[ext,label=270:{\scalebox{0.5}{$a_1$}}] (w1) at (-.7,-.5) {};
	\node[ext,label=270:{\scalebox{0.5}{$a_2$}}] (w2) at (-.35,-.5) {};
	\node[ext,label=270:{\scalebox{0.5}{$a_{k-1}$}}] (w4) at (.35,-.5) {};
	\node[ext,label=270:{\scalebox{0.5}{$a_k$}}] (w5) at (.7,-.5) {};
	\node[label=center:{$\scriptstyle\cdots$}] at (0,-.5) {};
	\draw (v) edge[bend right] (w1) edge (w2) edge (w4) edge[bend left] (w5);
\end{tikzpicture}
=
\sum
\pm
\begin{tikzpicture}[baseline=-.8ex]
	\node[draw,circle] (v) at (0,1.3) {$Z$};
	\node[draw,circle] (v1) at (0,.3) {$\scriptstyle\Gamma$};
	\node[ext,label=270:{\scalebox{0.5}{$a_1$}}] (w11) at (-.7,-0.5) {};
	\node[ext,label=270:{\scalebox{0.5}{$a_2$}}] (w12) at (-.35,-0.5) {};
	\node[label=center:{$\scriptstyle\cdots$}] (w13) at (0,-0.5) {};
	\node[ext,label=270:{\scalebox{0.5}{$a_{k-1}$}}] (w14) at (.35,-0.5) {};
	\node[ext,label=270:{\scalebox{0.5}{$a_{k}$}}] (w15) at (.7,-0.5) {};
	\node[ext,label=270:{\scalebox{0.5}{$1$}}] (w1) at (1.25,-0.5) {};
	\node[label=center:{$\scriptstyle\cdots$}] (w2) at (1.6,-0.5) {};
	\node[ext,label=270:{\scalebox{0.5}{$1$}}] (w3) at (1.95,-0.5) {};
	\draw (v) edge[bend right] (v1)
	edge (v1)
	edge[bend left] (v1)
	edge[bend left] (w1)
	edge[bend left] (w3);
	\draw (v1) edge[bend right] (w11)
	edge (w12)
	edge (w14)
	edge[bend left] (w15);
\end{tikzpicture},
\]

where the latter sum again runs over all ways to attach hairs of $Z$ to $\Gamma$.

\section{Applications}

In this section we give some examples of computations of the right-hand side in \hr[Theorem]{main-computation}. Despite the fact that in the examples below spaces are not parallelizable, it was shown in \cite{ca-wi23} that the rationaliation of the corresponding Fulton-MacPherson completions have a $\mathcal F_m^\Q$-module structure, which, in turn, makes the mapping space well defined. In particular, despite the fact that $\widetilde{\emb}$ is not defined, the calculations provide same amount of information.

\subsection{Comparison: embeddings into $S^n$}
Let $Z \coloneqq
2
\begin{tikzpicture}[baseline=-.65ex]
	\node (v) at (0,0) {$\omega$};
	\node (w) at (1.5,0) {$1$};
	\draw (v) edge[bend left] (w);
\end{tikzpicture}
\in \hgc_{\overline H^{\sbullet}(S^n), n}$  be a Maurer-Cartan element. The differential in the twisted hairy graph complex $\hgc_{\overline H^{\sbullet}(S^n), H^{\sbullet}(S^k), n}$ is split into three pieces:
\[
	\delta = \delta_{split} + \delta_{join} + (Z\cdot),
\]
where $(Z\cdot)$ can itself be split into two pieces: $(Z\cdot) = (Z\cdot)^{hair} + (Z\cdot)^{edge}$. The piece $(Z\cdot)^{hair}$ swaps an $\omega$-decoration of an internal vertex to a hair decorated by~$1$, and $(Z\cdot)^{edge}$ adds an internal edge starting at $\omega$-decoration. 
\[
\begin{tikzpicture}
	\draw (0,0) circle (.5);
	
	\fill (0,0) circle (2pt);
	
	\node[above] at (0,0) {$\omega$};
	
	\node[right] at (-1,.5) {$\Gamma$};
\end{tikzpicture}
\quad
\mapsto
\begin{tikzpicture}
	\draw (0,0) circle (.5);
	\fill (0,0) circle (2pt);
	\node at (0,0) {};
	\node[right] at (-1,.5) {$\Gamma$};
	\draw[thick, shorten >= 2pt] (0,0) to[out=0, in=90] (.7, -.7);
	\draw (.7,-.7) circle (2pt);
	\node[below] at (.7, -.7) {$1$};
\end{tikzpicture};
\qquad
\begin{tikzpicture}
	\draw (0,0) circle (0.5);
	
	\fill (-0.2,0) circle (1.5pt); 
	\fill (0.2,0.1) circle (1.5pt);  
	
	\node[above] at (-0.2, 0) {$\omega$};
	
\end{tikzpicture}
\mapsto
\begin{tikzpicture}
	\draw (0,0) circle (0.5);
	
	\fill (-0.2,0) circle (1.5pt); 
	\fill (0.2,0.1) circle (1.5pt);  
	
	\node at (-0.2, 0) {};
	\draw[thick] (-0.2,0) to[bend right] (0.2,0.1);
\end{tikzpicture}
\]

Let $i \colon \hgc_{\overline H^{\sbullet}(S^k), n} \hookrightarrow \hgc_{\overline H^{\sbullet}(S^n), H^{\sbullet}(S^k), n}$ be the inclusion of a subcomplex, and the $\cone(i)$ be the cone.
Define a filtration $F_{\sbullet}\cone(i)$ by the number of internal edges on the cone:
\[
    F_p\cone(i) \coloneqq \big\{(\Gamma, \Gamma')\in \hgc_{\overline H^{\sbullet}(S^k), n}[1]\oplus \hgc_{\overline H^{\sbullet}(S^n), H^{\sbullet}(S^k), n}\,|\; |E^i(\Gamma)| = |E^i(\Gamma')| \leq p\big\}.
\]
The associated graded $Gr_p\cone(i)$ has the differential
\[
    (\Gamma, \Gamma') \xmapsto{\partial} \big(0, \Gamma + (Z\cdot)^{hair}(\Gamma')\big).
\]
Therefore, the first page has form
\[
    E^1_{\sbullet, \sbullet} = H_{\sbullet}(\cone(i); \partial)
    =
    H_{\sbullet}(\hgc_{\overline H^{\sbullet}(S^n), H^{\sbullet}(S^k), n}/\hgc_{\overline H^{\sbullet}(S^k), n}; (Z\cdot)^{hair}).
\]
The latter vector space consists of graphs without hairs and with at least one vertex decorated by~$\omega$. Indeed, each graph with a hair can be obtained as the image under $(Z\cdot)^{hair}$ and the graphs without decorations by $\omega$ belong to $\hgc_{\overline H^{\sbullet}(S^k), n}$.
Denote the latter vector space by $V$. The second page then has form
\[
    E^2_{\sbullet, \sbullet} = H_{\sbullet}(V; \delta_{split} + (Z\cdot)^{edge}).
\]
The space~$V$ can be identified with the space of undecorated hairy graphs, where $(Z\cdot)^{edge}$ acts 
by attaching one of the hairs to an internal vertex different from the initial vertex.

By \cite[Theorem~1.1]{zivk19} (see also~\cite[Theorem~1(ii)]{f-n-w23}), the cohomology of the complex above is trivial. Therefore, the spectral sequence degenerates at $E^2_{\sbullet, \sbullet}$.
Thus, the inclusion
\[
	i \colon \hgc_{\overline H^{\sbullet}(S^k), n} \hookrightarrow \hgc_{\overline H^{\sbullet}(S^n), H^{\sbullet}(S^k), n}
\]
is a quasi-isomorphism.

By \cite{ar-tu14}, the source computes rational homotopy groups of the space $\overline\emb_{\partial}(\R^k, \R^n)$ of long embeddings $\R^k \hookrightarrow \R^n$. Thus, the computation above shows that the spaces $\overline\emb_{\partial}(\R^k, \R^n)$ and $\widetilde\emb(S^k, S^n)$ are rationally equivalent.

\begin{remark}
The computation above works for any general source, i.e. the inclusion
\[
    i \colon \hgc_{\overline H^{\sbullet}(M), n} \hookrightarrow \hgc_{\overline H^{\sbullet}(S^n), H^{\sbullet}(M), n}
\]
is a quasi-isomorphism.
\end{remark}

\subsection{Embeddings into $S^d \times S^d \setminus\!\{pt\}$}
The embedding space $\emb(S^k, S^d\times S^d \setminus\!\{pt\})$ has naturally three distinguished points given by the factor embeddings $S^d \hookrightarrow S^d\times S^d\setminus\{pt\}$ and the Haefliger embedding $S^{4n-1} \hookrightarrow S^{3n}\times S^{3n}\setminus\!\{pt\}$. In the following we describe three distinguished elements in $\MC_1(\hgc_{H^{\sbullet}(S^k), \overline H^{\sbullet}(S^d\times S^d\setminus\!\{pt\}), 2d})$ that are expected to correspond to the embeddings above.

Let $Z \coloneqq
\begin{tikzpicture}[baseline=-.65ex]
	\node (v) at (0,0) {$\omega_1$};
	\node (w) at (1.5,0) {$\omega_2$};
	\draw (v) edge[bend left] (w);
\end{tikzpicture}
\in \MC(\hgc_{\overline H^{\sbullet}(S^d\times S^d\setminus\!\{pt\}), 2d})$ be a Maurer-Cartan element. Here $\omega_1$, $\omega_2$ are generators of $H^*(S^d \times S^d \setminus \{pt\}; \mathbb Q) \cong \mathbb Q\langle \omega_1, \omega_2 \rangle$. Our goal is to describe the degree one part of the Maurer-Cartan set $\MC_1(\hgc_{A_M, \overline H^{\sbullet}(S^d\times S^d\setminus\!\{pt\}), 2d})$, where $A_M$ is a rational model for the source $M$ of the embedding $M \hookrightarrow S^d\times S^d \setminus \{pt\}$.

Recall that for the graphs from $\hgc_{A_M, \overline H^{\sbullet}(S^d\times S^d\setminus\!\{pt\}), 2d}$ the degree is given by
\[
	(2d-1)e - 2dv - (\text{degrees of hair decorations}) + (\text{degrees of internal vertex decorations}),
\]
where $e$ and $v$ denote the number of edges and internal vertices respectively.
In particular, with the shift we get extra $+1$:
\[
(2d-1)e - 2dv - (\text{degrees of hair decorations}) + (\text{degrees of internal vertex decorations}) + 1.
\]

As in \cite{f-t-w17} we have that only trees contribute to the degree~$1$ component. We claim that only there no graphs with hairs decorated by $1$ contribute to degree one. The minimal degree of a graph in $\hgc_{A_M, \overline H^{\sbullet}(S^d\times S^d\setminus\!\{pt\}), 2d}[1]$ is
\[
    (2d-1)e - 2dv - \dim(A)h + 1 = - (2d-3) + (2e-3v) + (2d - \dim(A) - 3) h + 1 > - (2d-3) + 1,
\]
where $e$, $v$ and $h$ denote the number of edges, internal vertices and hairs respectively. Therefore, if there is at least one hair decorated by $1 \in A_M$ the degree will be
\[
    (2d-1)e - 2dv - \dim(A)(h-1) + 1 > -(2d-3) + \dim(A) + 1 > 1,
\]
as we consider only codimension at least~$3$. Thus, degree zero graphs have no $1 \in A_M$ hair decorations.

Finally, we show that there is only one internal vertex decoration. Graphs with minimal degrees are unitrivalent trees. Such graphs have degree
\begin{align*}
    (2d-1)(2v+1) - 2dv - \dim(A)(v+2) + 1
    &= (2d-1)(v-1) - 2dv + (2d-\dim(A)-1)(v+2) + 1
    \\
    &= (2d-\dim(A)-2)v + (2d-2\dim(A)-1) + 1.
\end{align*}
Adding an internal vertex decoration increases degree by at least $\dim(A) - d + 1$. The minimal case happens if we remove a hair and add an internal vertex decoration. Therefore, graphs with two (and more) internal vertex decorations have degree at least:
\[
    (2d-\dim(A)-2)v + 2d - 2\dim(A) - 1 + 2(\dim(A)-d+1) + 1 = (2d-\dim(A)-2)v + 2 > 2.
\]
Thus, such graphs do not contribute to the one degree part of the Maurer-Cartan space.

Applying IHX relations we remain with very few underlying graphs:
\[
\begin{tikzpicture}
	\node[ext] (v1) at (0,0) {};
	\node[ext] (v2) at (.7,0) {};
	\node[int,label=90:$*$] (i1) at (.35,.7) {};
	\draw (i1) edge (v1)
	(i1) edge (v2);
\end{tikzpicture}
\qquad
\begin{tikzpicture}
	\node[ext] (v1) at (0,0) {};
	\node[ext] (v2) at (.7,0) {};
    \node[ext] (v3) at (1.4,0) {};
	\node[int] (i1) at (.7,.7) {};
	\draw (i1) edge (v1) edge (v2) edge (v3);
\end{tikzpicture}
\]

We are interested in the case $M = S^k$. In this formal case $A_M$ can be taken to be the cohomology ring $H^{\sbullet}(S^k) \cong \Q[\omega]/(\omega^2)$ and the only graphs respecting the constraints above are
\[
\begin{tikzpicture}
	\node (v1) at (0,0) {$\scriptstyle\omega$};
	\node (v2) at (.7,0) {$\scriptstyle\omega$};
	\node[int, label=90:{$\scriptstyle\omega_i$}] (i1) at (.35,.7) {};
	\draw (i1) edge (v1) 
	(i1) edge (v2);
\end{tikzpicture},
i = 1,2;
\qquad
\begin{tikzpicture}
	\node (v1) at (0,0) {$\scriptstyle\omega$};
	\node (v2) at (.7,0) {$\scriptstyle\omega$};
    \node (v3) at (1.4,0) {$\scriptstyle\omega$};
	\node[int] (i1) at (.7,.7) {};
	\draw (i1) edge (v1) edge (v2) edge (v3);
\end{tikzpicture}.
\]


\begin{thebibliography}{FTW2}	
\bibitem[AT]{ar-tu14}
Arone, Gregory; Turchin, Victor.
\emph{On the rational homology of high-dimensional analogues of spaces of long knots}.
Geom. Topol.~\textbf{18} (2014), no.~3, 1261--1322.

\bibitem[Bo]{boar71}
Boardman, J. Michael.
\emph{Homotopy structures and the language of trees}.
Collection: Algebraic Topology~\textbf{\RN{22}}, AMS (1971), 37--58.

\bibitem[CW]{ca-wi23}
Campos, Ricardo; Willwacher, Thomas.
\emph{A model for configuration spaces of points}.
Algebr. Geom. Topol.~\textbf{23} (2023), no.~5, 2029--2106.

\bibitem[FNW]{f-n-w23}
Felder, Matteo; Naef, Florian; Willwacher, Thomas.
\emph{Stable cohomology of graph complexes}.
Sel. Math. New Ser.~\textbf{29} (2023), 23.

\bibitem[Fr1]{fres09}
Fresse, Benoit.
\emph{Modules over operads and functors}.
Lecture Notes in Mathematics 1967, Springer, Berlin (2009).

\bibitem[Fr2]{fres17}
Fresse, Benoit.
\emph{Homotopy of Operads and Grothendieck–Teichmüller Groups. Part 2: The Applications of (Rational) Homotopy Theory Methods}.
Mathematical Surveys and Monographs, vol. \textbf{217}, American Mathematical
Society, Providence, RI, 2017.

\bibitem[FTW1]{f-t-w20}
Fresse, Benoit; Turchin, Victor; Willwacher, Thomas.
\emph{On the rational homotopy type of embedding spaces of manifolds in $\mathbb R^n$}.
\href{https://arxiv.org/abs/2008.08146}{arXiv:2008.08146}.

\bibitem[FTW2]{f-t-w17}
Fresse, Benoit; Turchin, Victor; Willwacher, Thomas.
\emph{The rational homotopy of mapping spaces of $E_n$ operads}.
\href{https://arxiv.org/abs/1703.06123}{arXiv:1703.06123}.

\bibitem[FW1]{fr-wi20}
Fresse, Benoit; Willwacher, Thomas.
\emph{The intrinsic formality of $E_n$-operads}.
J. Eur. Math. Soc.~\textbf{22} (2020), no. 7, 2047--2133.

\bibitem[FW2]{fr-wi20-1}
Fresse, Benoit; Willwacher, Thomas.
\emph{Mapping spaces for DG Hopf cooperads and homotopy automorphisms of the rationalization of $E_n$-operads}.
\href{https://arxiv.org/abs/2003.02939}{arXiv:2003.02939}.

\bibitem[GW]{go-we99}
Goodwillie Thomas G.; Weiss, Michael S.
\emph{Embeddings from the point of view of immersion theory~\RN{2}}.
Geom. Topol.~\textbf{3} (1999), 103--118.

\bibitem[GJ]{ge-jo94}
Getzler, Ezra; Jones, John D.S.
\emph{Operads, homotopy algebra and iterated integrals for double loop spaces}.
\href{https://arxiv.org/abs/hep-th/9403055}{arXiv:hep-th/9403055}.

\bibitem[Ko]{kont99}
Kontsevich, Maxim.
\emph{Operads and motives in deformation quantization}.
Lett. Math. Phys.~\textbf{66} (1999), no.~3, 157--216.

\bibitem[LS]{la-st08}
Lambrechts, Pascal; Stanley, Don.
\emph{A remarkable DG-module model for configuration spaces}.
Algebr. Geom. Topol.~\textbf{8} (2008), no.~2, 1191--1222.

\bibitem[Sa]{salv01}
Salvatore, Paolo.
\emph{Configuration spaces with summable labels}.
Cohomological methods in homotopy theory (Bellaterra, 1998), Progr. Math.~\textbf{196},
Birkhäuser, Basel (2001), 375--395.

\bibitem[Si]{sinh04}
Sinha, Dev P.
\emph{Manifold-theoretic compactifications of configuration spaces}.
Selecta Math.~\textbf{10} (2004), 391--428.

\bibitem[Tu]{turc13}
Turchin, Victor.
\emph{Context-free manifold calculus and the Fulton-MacPherson operad}.
Algebr. Geom. Topol.~\textbf{13} (2013), no.~3, 1243--1271.

\bibitem[We]{weis99}
Weiss, Michael S.
\emph{Embeddings from the point of view of immersion theory~\RN{1}}.
Geom. Topol.~\textbf{3} (1999), 67--101.

\bibitem[Wi1]{will23}
Willwacher, Thomas.
\emph{Models for configuration spaces of points via obstruction theory~\RN{1}}.
\href{https://arxiv.org/abs/2302.07369v1}{arXiv:2302.07369}.

\bibitem[Wi2]{will24}
Willwacher, Thomas.
\emph{Rational homotopy theory of operad modules through colored operads}.
\href{https://arxiv.org/abs/2412.11182v1}{arXiv:2412.11182}.

   
\bibitem[\v{Z}i]{zivk19}
Živković, Marko.
\emph{Differentials on graph complexes III\textup: hairy graphs and deleting a vertex.}
Lett Math Phys~\textbf{109} (2019), 975--1054.
\end{thebibliography}
\end{document}